\documentclass[reqno]{amsart}
\usepackage{amsmath,amsfonts,amssymb,amsthm,epsfig,epstopdf,url,array,comment,hyperref,mathrsfs,mathtools, enumitem}
\usepackage{xparse}
\usepackage{hyperref}
\usepackage{setspace}
\usepackage{graphicx}
\usepackage{float}
\usepackage{tikz-cd}
\usepackage{cancel}
\usepackage[cmtip,all]{xy}
\newcommand{\longsquiggly}{\xymatrix{{}\ar@{<~>}[r]&{}}}
\usepackage{tabularray}

\theoremstyle{plain}
\newtheorem{theorem}{Theorem}[section]
\newtheorem{lemma}[theorem]{Lemma}
\newtheorem{proposition}[theorem]{Proposition}
\newtheorem{corollary}[theorem]{Corollary}

\theoremstyle{definition}
\newtheorem{definition}[theorem]{Definition}
\newtheorem{conjecture}[theorem]{Conjecture}

\newtheorem{examples}[theorem]{Examples}

\newtheorem*{notation}{Notation}
\newtheorem{remark}[theorem]{Remark}

\newcommand{\ind}{\mathbf 1}

\newcommand{\R}{\mathbb{R}}
\renewcommand{\d}{\textnormal{ d}}

\newcommand{\grad}{\nabla}

\newcommand{\ball}{\mathcal{B}}

\newcommand{\hidethis}[1]{}

\newcommand{\E}{\mathbb{E}}
\newcommand{\support}{\textnormal{supp}}

\renewcommand{\div}{\textnormal{div}}

\newcommand{\vol}[1]{\left\lvert #1 \right\rvert}

\newcommand{\energy}[1]{\mathcal{U} \left( #1 \right)}
\RenewDocumentCommand{\energy}{g}{\mathcal{U}\IfValueT{#1}{\left(#1\right)}}
\newcommand{\edensity}[1]{ U \left( #1 \right)}
\RenewDocumentCommand{\edensity}{g}{U\IfValueT{#1}{\left(#1\right)}}
\newcommand{\norm}[1]{\Vert #1 \Vert_{2}}
\newcommand{\pclass}{\mathcal{PC}}

\begin{document}

\title[Gaussian Convolution and the Kneser--Poulsen Conjecture]{Gaussian Convolution, Internal Energies, and the Kneser--Poulsen Conjecture}
\author{Gautam Aishwarya}
\address{Technion -- Israel Institute of Technology, Faculty of Mathematics, Technion City, Haifa 3200003, Israel.}
\email{gautama@technion.ac.il}
\author{Dongbin Li}
\address{Shaanxi Normal University, School of Mathematics and Statistics, Xi'an, 710119, China}
\email{dongbinli@snnu.edu.cn}
\begin{abstract}
We study to what extent the majorisation order between a probability measure and its $1$-Lipschitz image is preserved when both measures undergo Gaussian convolution. We show that the majorisation order is fully preserved in dimensions $n\leq 2$, and obtain dimension-dependent partial preservation in higher dimensions. The perspective taken is that majorisation between densities amounts to comparison through internal energies satisfying a certain pressure condition. Accordingly we introduce a notion of iterated nonnegativity of pressure, closely related to iterated pressures arising in optimal transport, placing the majorisation order within a graded hierarchy of internal energy comparisons. Using the observation that the volume of Euclidean neighbourhoods can be detected from internal energy measurements along the heat flow, we show that our results imply several principal known cases of the Kneser--Poulsen conjecture.

\end{abstract}
\maketitle
\section{Introduction}
\subsection{Internal energy, pressure classes, and the main question}
We think of probability densities $\rho$ on $\R^n$ as different configurations of the same gas. Its \textbf{internal energy} is a functional of the form
\begin{equation}
\energy(\rho)=\int_{\R^n} U(\rho(x))\,\d x,
\end{equation}
where $U:[0,\infty)\to\R$, called the \textbf{energy density}, is a measurable function satisfying $U(0)=0$. For clarity of exposition, throughout the introduction we assume that $\edensity$ is sufficiently differentiable; rigorous definitions without this regularity assumption appear in Section \ref{sec: energypressureprelims}.

The internal energy $\mathcal U$ quantifies the resistance of the configuration to compression through its associated \textbf{thermodynamic pressure} $P$. Borrowing a discussion from \cite[Remarks 5.18]{Villani03} and \cite{McCann97} almost verbatim, consider a gas of mass $m$ distributed uniformly in a region of volume $v$. Its total internal energy is $v\edensity{m/v}$, and hence $\d\mathcal U=-P\d v$ leads to the definition
\begin{equation}\label{eq: defPressureintro} 
P (\rho) = \rho \edensity'(\rho) - \edensity{\rho}. 
\end{equation}

Of course, positive work must be performed to compress a gas by rescaling its volume, thus to be physically realistic, we should assume that internal energy has a \textbf{nonnegative pressure} law. The change-of-variables formula shows that this is exactly the condition ensuring that internal energy increases under sufficiently regular volume contractions; if $\rho_{1} \d x = T_{\#} (\rho_{0} \d x)$ and $T$ is volume contracting, then
\begin{equation}
\energy (\rho_{0}) \leq \energy{\rho_{1}}. 
\end{equation}

A physically realistic model should also require pressure not to decrease as the gas becomes denser. We will therefore be particularly interested in internal energies $\energy$ with a \textbf{nondecreasing pressure} law. Requiring $\rho\mapsto P(\rho)$ to be nondecreasing is equivalent to convexity of the energy density $\edensity$. In particular, such internal energies decrease under free diffusion;
\begin{equation}
\energy{\rho * \gamma_s}\leq \energy{\rho},
\end{equation}
for every density $\rho$ and every $s>0$, where $\gamma_s\propto e^{-\frac{1}{2s}\norm{\cdot}^2}$ denotes the Gaussian density with variance $s$.

We are thus led to two competing effects: volume contraction increases internal energy, while free diffusion decreases it. One might therefore ask whether the energy ordering produced by a volume contraction persists when both configurations are subsequently allowed to diffuse. In dimensions $n\geq 2$, however, such a persistence statement is false for very simple volume contractions. For instance, consider the Boltzmann energy density $\edensity(\rho)=\rho\log\rho$ and deform a standard Gaussian in $\R^2$ by the volume-preserving anisotropic map \begin{equation} T=
    \begin{pmatrix}
        2 & 0\\
        0 & 1/2
    \end{pmatrix}. \end{equation}
The two configurations have the same internal energy before diffusion, whereas after any positive amount of free diffusion, the desired energy ordering is reversed. This failure of persistence is not surprising, since volume contraction records only the distortion of the volume form, whereas free diffusion depends on the full Euclidean metric and hence detects anisotropic distortions invisible to volume contraction. We are led to the following conjecture by strengthening the initial condition.

\begin{conjecture} \label{con: mainheat}
    For every probability measure $\mu$ on $\R^n$, every $1$-Lipschitz map $T:\R^n\to\R^n$, and every $s>0$, we have
    \begin{equation} \label{eq: mainheat}
    \energy{\mu* \gamma_{s}} \leq \energy{T_{\#}\mu * \gamma_{s}},
    \end{equation}
    for every internal energy $\mathcal{U}$ with a \textbf{convex energy density} $\edensity$, that is, with a \textbf{nondecreasing pressure} law. 
\end{conjecture}

Note that Conjecture \ref{con: mainheat} is stated for all probability measures, not just those with densities. This makes perfect sense since $\mu * \gamma_{s}$ is always absolutely continuous no matter how irregular $\mu$ is. In fact, such mollifications of internal energies to admit all probability measures have already been studied, for example, in \cite{CarrilloEspositoWu24}. 

In probability theory and related areas, there is another terminology commonly used for the energy comparisons appearing in Conjecture \ref{con: mainheat}. If two probability densities $\rho$ and $\sigma$ satisfy
\begin{equation}
    \energy{\rho}\leq\energy{\sigma}
\end{equation}
for every internal energy $\energy$ with convex energy density $\edensity$, equivalently with a nondecreasing pressure law, then $\rho$ is said to be \textbf{majorised} by $\sigma$. While majorisation is widely studied (\cite{ShakedShanthikumar07, MarshallOlkinArnold11}), the reader may refer to recent works \cite{MelbourneRoberto23, DePhilippisShenfeld25} for its relationship with transport maps, or \cite[Section 2]{AishwaryaAlamLiMyroshnychenkoZatarainVera23} where it is used to study problems similar to those in the present paper. Thus, Conjecture \ref{con: mainheat} asks whether the majorisation order induced by a $1$-Lipschitz map persists under Gaussian convolution.

The preceding two pressure conditions are the first levels of a natural hierarchy that we introduce in this paper. Since they will depend only on the energy density $\edensity$ and not on the ambient dimension, we regard the pressure classes as classes of energy densities. We have already seen the \textbf{$0$-th pressure class} $\pclass_{0}$, which we define as the class of all energy densities $\edensity$ with pressure 
\begin{equation}
P(\rho) \geq 0. 
\end{equation}
We have also seen the \textbf{$1$-st pressure class} $\pclass_{1}$ whose energy densities are convex, or equivalently for which $P$ is nondecreasing. The fact that $P$ is nondecreasing can be restated as asking 
\begin{equation}
P (c \rho) - P (\rho) \geq 0,
\end{equation}
for all $c \geq 1$. This motivates the definition of the \textbf{$2$-nd pressure class} $\pclass_{2}$ of convex internal energies that we define with the requirement that, 
\begin{equation}
    \left[ P \left( c_{1} c_{2} \rho \right) - P \left( c_{1} \rho \right) \right]
    -
    \left[ P\left( c_{2} \rho \right) - P\left( \rho \right) \right]
    \geq 0,
\end{equation}
for all $c_{1}, c_{2} \geq 1$. 

Higher pressure classes are defined, in Definition \ref{def:pressureclasses}, by iterating this procedure and requiring monotonicity of pressure at the higher orders so obtained. In this introduction, since we assume smoothness of $\edensity$, the condition for an energy density $\edensity$ to belong to \textbf{$k$-th pressure class} $\pclass_{k}$, $k \geq 1$, becomes that $\edensity$ be convex and  
\begin{equation}
    \left(\rho\frac{\d}{\d\rho}\right)^kP(\rho)
    \geq0.
\end{equation}
These classes are closed under positive linear combinations and form a nested hierarchy (see Remark \ref{rem: nesting} and Proposition \ref{prop: regularirregularsame}).

An iteration of pressure closely related to ours appears in optimal transport. Let us briefly comment on it. Writing $D=\rho\frac{\d}{\d\rho}$ for the \emph{Euler operator}, our pressure classes are defined using iterates of $D$ applied to $P$. In optimal transport, it is customary instead to organise the iterated pressures using the shifted operator $D-I$. Thus, $P=P_1=(D-I)\edensity$, while the higher iterated pressures are given by $P_2=(D-I)^2\edensity$, and so on. When the internal energy $\energy$ is differentiated $k$ times along a Wasserstein geodesic, the quantities $P_1,\ldots,P_k$ naturally appear in the resulting expression (for example, see \cite[Chapter 15]{Villani09} for $P_{2}$). By the binomial theorem, these may equivalently be expressed in terms of our iterates $D^0P,\ldots,D^{k-1}P$.

Convexity of internal energy along Wasserstein geodesics is a central theme in optimal transport. Accordingly, the most important pressure condition in this context, discovered by McCann in \cite{McCann97}, involves $P$ and $P_2$ together with the ambient dimension $n$. The resulting classes of internal energies, denoted $\mathcal{DC}_n$ in \cite[Chapter 16]{Villani09}, are dimension dependent. By contrast, the higher-order pressure conditions considered in this paper are dimension independent, although the result we obtain in Theorem \ref{thm: pressureclassmain} depends on the ambient dimension.

\subsection{Persistence of internal energy comparisons under Gaussian convolution}
Our first main result is an affirmative resolution of Conjecture \ref{con: mainheat} in dimension $2$. 
\begin{theorem}\label{thm: maindim2}
    Conjecture \ref{con: mainheat} is true when $n=2$. Thus, for every $1$-Lipschitz map $T: \R^2 \to \R^2$ and every probability measure $\mu \in \mathcal{P}(\R^2$), we have
    \begin{equation}
        \int \edensity (\mu * \gamma_{s}) \d x \leq \int \edensity (T_{\#}\mu * \gamma_{s}) \d x,
    \end{equation}
    whenever $\edensity$ is a convex function with $\edensity(0) = 0$ and $s > 0$.
\end{theorem}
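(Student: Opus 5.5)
We reduce majorisation to a family of extremal convex energies, then compare via the heat flow using a two-dimensional Kneser--Poulsen-type inequality.

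\textbf{Step 1: reduction to extremal energies and discrete measures.} Every convex $\edensity$ with $\edensity(0)=0$ is, up to a linear term (which contributes the same mass on both sides), a positive combination of the functions $\rho\mapsto(\rho-t)_+$, $t\ge 0$. Hence it suffices to prove
\begin{equation*}
\int (\mu*\gamma_s - t)_+ \d x \le \int (T_\#\mu*\gamma_s - t)_+\d x \qquad \text{for all } t\ge 0.
\end{equation*}
By weak approximation we may take $\mu=\sum_{i=1}^N m_i\delta_{x_i}$. Since $T$ is $1$-Lipschitz, the image points $y_i=T(x_i)$ satisfy $\|y_i-y_j\|_2\le\|x_i-x_j\|_2$. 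Both sides are continuous under weak convergence, because Gaussian convolution makes the densities uniformly smooth and tight. So the statement reduces to the following: if the points $y$ are a contraction of the points $x$ with the same weights, then $f_y=\sum m_i\gamma_s(\cdot-y_i)$ majorises $f_x=\sum m_i\gamma_s(\cdot-x_i)$.

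\textbf{Step 2: continuous contraction in $\R^{2n}$.} Following the Gorbovickis/Csikós strategy used for Kneser--Poulsen in the plane, embed $\R^2\subset\R^4$ and use the motion
\begin{equation*}
z_i(\tau)=\left(\tfrac{x_i+y_i}{2}+\cos(\pi\tau)\tfrac{x_i-y_i}{2},\ \sin(\pi\tau)\tfrac{x_i-y_i}{2}\right)\in\R^4 .
\end{equation*}
Along this motion every pairwise distance is monotone nonincreasing. The key point is that Gaussian convolution behaves well under taking marginals. The density $f_x$ on $\R^2$ is the projection to $\R^2$ of the corresponding Gaussian mixture on $\R^4$, and the same holds at the endpoint $y$. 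However, majorisation is not preserved by marginals in general, so instead we use the Fourier/volume-type identity. Write the quantity $\int (f-t)_+$ through $\int \Phi(f)$, and show that $\frac{d}{d\tau}\int\Phi(f_{z(\tau)})$ is a sum over pairs $i<j$ of $\frac{d}{d\tau}\|z_i-z_j\|$ times a weight.

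\textbf{Step 3: the pairwise derivative formula.} For a mixture $f=\sum m_i\gamma_s(\cdot-z_i)$ we have $\partial_{z_i}f=m_i\frac{(\cdot-z_i)}{s}\gamma_s(\cdot-z_i)$. Differentiating $\int \edensity(f)$ and using $\sum_i$ translation invariance, we write $\frac{d}{d\tau}\int\edensity(f)=\sum_{i<j}\langle \dot z_i-\dot z_j, w_{ij}\rangle$. Here
\begin{equation*}
w_{ij}=\tfrac{m_im_j}{s^2}\int \edensity''(f)\,\gamma_s(\cdot-z_i)\gamma_s(\cdot-z_j)\,(\cdot-z_i)\wedge\ldots ,
\end{equation*}
obtained after an integration by parts in the form $\int \edensity'(f)\nabla\gamma_i=-\int\edensity''(f)\nabla f\,\gamma_i$. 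The hope is that $w_{ij}$ points along $z_i-z_j$ with a nonnegative coefficient, using $\edensity''\ge0$ and the identity $(\cdot-z_i)-(\cdot-z_j)=z_j-z_i$, which lets the $i$ and $j$ terms pair up. If this holds, monotone decrease of distances gives monotone increase of energy in $\tau$.

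\textbf{Main obstacle: the dimension.} Two problems arise from working in $\R^4$.

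First, energies computed in $\R^4$ are not the energies of the $\R^2$ densities. To handle this, I would instead run the flow with the ambient variable kept in $\R^2$. This means applying Step 3 directly to a planar continuous motion where one exists. For $N$ points in the plane such a motion need not exist, which is exactly why the Kneser--Poulsen argument needs $2n$ dimensions plus a volume-formula that descends from $\R^{4}$ to $\R^2$, in Csikós's formula via $n+2$.

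Second, the sign of the cross term is not guaranteed. The paper's abstract indicates that heat-flow internal energies detect neighbourhood volumes, and that full preservation holds exactly for $n\le 2$. This suggests the intended proof instead goes through a dimension-lifting identity: Gaussian mixture energies in $\R^n$ are expressed via integrals over $\R^{n+2}$ of radial objects. By analogy with Csikós's formula for $n=2$, this identity is applied so that the $\R^4$ motion becomes admissible.

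I expect establishing this lifting is the crux. I would first try to prove it for the extremal energies $(\rho-t)_+$ by writing the superlevel-set structure of Gaussian mixtures and comparing with the $\R^4$ picture.
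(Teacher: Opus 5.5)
You have the right lift, the leapfrog motion into $\R^4$, but the proposal stops where the proof actually happens. You never explain how a comparison in $\R^4$ gives the planar one, and you leave this lifting open as the crux.

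The missing idea is to change the energy density rather than the measure. At $\tau=0$ and $\tau=1$ your lifted points are $(x_i,0)$ and $(y_i,0)$. So the $\R^4$ densities are exactly $f_x\otimes\gamma_s^{(2)}$ and $f_y\otimes\gamma_s^{(2)}$, and no general statement about marginals is needed. Polar coordinates give, for any $\widetilde U$,
\begin{equation*}
\int_{\R^2}\widetilde U\bigl(\rho\,\gamma_s^{(2)}(y)\bigr)\d y=\frac{1}{C}\int_0^{C\rho}\frac{\widetilde U(w)}{w}\d w,\qquad C=\frac{1}{2\pi s}.
\end{equation*}
Choose $\widetilde U(w)=w\,U'_+(w/C)$. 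Then $\int_{\R^4}\widetilde U\bigl(f(x)\gamma_s^{(2)}(y)\bigr)\d x\,\d y=\int_{\R^2}U(f)\d x$ for every convex $U$ continuous at $0$; if $U$ is discontinuous at $0$, both planar energies are $-\infty$. For a hinge this is the identity $\int_{\R^2}(f-t)_+\d x=\int_{\R^4}F\,\ind_{\{F\geq Ct\}}\d z$ with $F=f\otimes\gamma_s^{(2)}$. The key feature is that $\widetilde U(w)/w=U'_+(w/C)$ is nondecreasing precisely because $U$ is convex. So convexity in $\R^2$ becomes nonnegative pressure in $\R^4$; this is Theorem \ref{thm: BCheatflowintro} with $j=0$. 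With this identity your hinge reduction in Step 1 is unnecessary. As stated it is also inaccurate when $U'_+(0)=-\infty$, e.g. for $U(\rho)=\rho\log\rho$.

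Consequently, what you need along the $\R^4$ motion is monotonicity for energies with nonnegative pressure. These are typically not convex: for a hinge, $\widetilde U(w)=w\,\ind_{\{w\geq Ct\}}$. So Step 3, which rests on $U''\geq0$, targets the wrong class.

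Step 3 is not established even for convex $U$. In your formula, the component of $\int U''(f)(x-z_j)\gamma_s(x-z_i)\gamma_s(x-z_j)\d x$ transverse to $z_i-z_j$ is $\int U''(f)\bigl(x-\tfrac{z_i+z_j}{2}\bigr)\gamma_s(x-z_i)\gamma_s(x-z_j)\d x$. This integrates by parts to $\tfrac{s}{2}\int U'''(f)\nabla f\,\gamma_s(x-z_i)\gamma_s(x-z_j)\d x$, in which every other centre enters through $\nabla f$. Nothing makes your $w_{ij}$ parallel to $z_i-z_j$.

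A pairwise representation does exist, but its weight is $P(f)/f^2$ rather than $U''(f)$. The paper reaches it differently, in Theorem \ref{thm: heatflowpositivepressure}:
\begin{itemize}
\item For $U(\rho)=\rho^m$ and $\mu_t=(\widetilde T_t)_\#\mu$, one has $\int_{\R^4}(\mu_t*\gamma_s)^m\d z=\frac{\gamma_s^{(4)}(0)^{m-1}}{m^{2}}\E\exp\bigl(-\frac{1}{2ms}\sum_{i<j}\norm{\widetilde T_t(X_i)-\widetilde T_t(X_j)}^2\bigr)$, with $X_1,\ldots,X_m$ independent with law $\mu$. This is a monotone function of all pairwise distances simultaneously.
\item The Lebesgue--Stieltjes chain rule and reassembly of the Gaussian products express the energy difference as an integral against the nonnegative Stieltjes measure of $t\mapsto-\norm{\widetilde T_t(X_1)-\widetilde T_t(X_2)}^2$, with integrand weighted by $P(f_t)/f_t^2$.
\item This extends linearly to polynomial $U$, and then to every energy in $\pclass_0$ by Bernstein approximation and the stochastic-order characterisation of $\pclass_0$.
\end{itemize}
Your outline becomes a proof only with both ingredients: this nonnegative-pressure monotonicity in $\R^4$, and the marginalisation identity above.
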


Theorem \ref{thm: maindim2} follows from a more general dimension-dependent version where the pressure hierarchy discussed earlier enters the picture.  

\begin{theorem}\label{thm: pressureclassmain}
    Let $\mu\in\mathcal P(\R^n)$, and suppose $T:\R^n\to\R^n$ is a $1$-Lipschitz map. Then, for all $s>0$,
    \begin{equation} \label{eq: pressureclassmain}
        \energy{\mu*\gamma_s}
        \leq
        \energy{T_{\#}\mu*\gamma_s}
    \end{equation}
    for every internal energy $\energy$ in $\pclass_{k}$, whenever $k \geq \lceil n/2\rceil$.
\end{theorem}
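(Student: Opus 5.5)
The plan is to combine three ingredients: a reduction to finitely supported measures, a Kirszbraun/Alexander-type continuous contraction in a doubled dimension, and a \emph{dimension-lifting} identity for internal energies. The lifting identity is where the pressure classes $\pclass_k$ and the threshold $k\geq\lceil n/2\rceil$ should enter.

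\emph{Step 1 (reduction).} By approximating $\mu$ weakly by finitely supported measures $\mu=\sum_i w_i\delta_{p_i}$, it suffices to treat the atomic case. Here $\mu*\gamma_s$ depends continuously on $\mu$ with good tails, and one can first work with nice $\edensity$ (say Lipschitz near $0$, with truncations) and then pass to the limit by monotone convergence using the regularity results of Section~\ref{sec: energypressureprelims}. In the atomic case we set $q_i=T(p_i)$, so that $|q_i-q_j|\leq|p_i-p_j|$. By the classical Alexander/Gromov trick, there is a continuous motion $t\mapsto(p_i(t))_i$ in $\R^{2n}$ with $p_i(0)=(p_i,0)$ and $p_i(1)=(q_i,0)$ along which every distance $|p_i(t)-p_j(t)|$ is nonincreasing.

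\emph{Step 2 (monotonicity along a contraction in $\R^{m}$ for convex $\edensity$).} Let $f_t=\sum_i w_i\gamma_s^{(m)}(\cdot-p_i(t))$. I would show that $t\mapsto\int_{\R^m}\edensity(f_t)\,\d x$ is nondecreasing for every convex $\edensity$, in any dimension $m$. Differentiating gives $\int \edensity'(f_t)\,\partial_t f_t$ with $\partial_t f_t=-\sum_i w_i\nabla\gamma_s(x-p_i)\cdot\dot p_i$. Integrating by parts turns this into $\int\edensity''(f_t)\,\nabla f_t\cdot(\ldots)$, which should symmetrise into a sum over pairs $i<j$. Each pair term should be $-\tfrac{\d}{\d t}|p_i-p_j|$ times a nonnegative weight, using $\nabla\gamma_s(x-p)=-\tfrac{x-p}{s}\gamma_s(x-p)$. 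This is a Gaussian analogue of Csikós's formula. I expect the nonnegativity of these weights to be the main technical obstacle. My first attempt would be to reduce to extremal convex $\edensity(r)=(r-\lambda)_+$, so that the integral lives on the superlevel set $\{f_t>\lambda\}$, and then use the explicit pairwise structure of $\nabla\gamma_s$ to symmetrise.

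\emph{Step 3 (dimension lifting).} At $t=0,1$ the points lie in $\R^n\times\{0\}$. Hence $f_t(x,y)=g(x)\gamma^{(n)}_s(y)$, with $g=\mu*\gamma_s$ or $T_\#\mu*\gamma_s$. Fubini and polar coordinates in $y$ give
\begin{equation*}
\int_{\R^{2n}}\edensity(f_t)=\int_{\R^n}\widetilde{\edensity}(g(x))\,\d x,\qquad \widetilde{\edensity}(r)=c_n\int_0^\infty \edensity\bigl(r\,a_n e^{-\tau}\bigr)\tau^{\frac n2-1}\,\d\tau .
\end{equation*}
In the logarithmic variable $u=\log r$, where $D=\rho\frac{\d}{\d\rho}$ acts as $\frac{\d}{\d u}$, the map $\edensity\mapsto\widetilde{\edensity}$ is a Riemann--Liouville fractional integral of order $n/2$ (up to the exponential weight). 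Its inverse is therefore a differential operator of order $n/2$ in $D$. So, given $V\in\pclass_k$, I would construct $\edensity$ with $\widetilde{\edensity}=V$ and check that $\edensity$ is convex exactly when $D^{j}P_V\geq 0$ for $j$ up to about $n/2$. Iterating $D$ on $P$ (rather than on $V$) is natural here, since $P=(D-I)V$ and the lift commutes with $D$. Steps 1--2 then give the statement for $V$.

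The delicate part of this step is odd $n$. There the inverse is a genuinely fractional derivative of order $n/2$. I would write it as $D^{\lceil n/2\rceil}$ composed with a positive fractional integral of order $1/2$, which preserves nonnegativity. This explains why $k=\lceil n/2\rceil$ suffices, and the nesting of the $\pclass_k$ (Remark~\ref{rem: nesting}) handles larger $k$. Theorem~\ref{thm: maindim2} is then the case $n=2$, $k=1$, recovering all convex energies.
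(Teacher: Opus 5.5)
Your architecture is the paper's: lift $T$ to a continuous contraction in a higher-dimensional space, compare energies there, then marginalise the auxiliary Gaussian coordinates. As set up, however, it proves the theorem only for $k\geq\lceil n/2\rceil+1$. In particular, the claimed recovery of Theorem \ref{thm: maindim2} ($n=2$, $k=1$) does not follow.

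\textbf{Where the level is lost.} The loss is at the interface of Steps 2 and 3. In the lifted space you only have monotonicity for convex $\edensity$, i.e.\ for $\pclass_1$. Marginalising two Gaussian coordinates raises the pressure class by exactly one (Theorem \ref{thm: BCheatflowintro}). So your assertion that the preimage is convex once $D^jP_V\geq0$ for $j$ up to $n/2$ is off by one.

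Concretely, for $n=2$ your lift reads $\widetilde{\edensity}(\rho)=C^{-1}\int_0^{C\rho}\edensity(w)w^{-1}\,\d w$ with $C=(2\pi s)^{-1}$. The preimage of $V$ is therefore $\edensity(u)=uV'(u/C)$. In the smooth case, $\edensity''\geq0$ is equivalent to $D^2P_V\geq0$, that is, to $V\in\pclass_2$. Take a hinge $V(\rho)=(\rho-a)_+$, which is what the Kneser--Poulsen application uses. Its preimage is $\edensity(u)=u\ind_{\{u>Ca\}}$, which is not convex, so your Step 2 says nothing about it.

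\textbf{The missing idea.} Along a \emph{continuous} contraction, the Gaussian comparison holds for the strictly larger class $\pclass_0$ of nonnegative pressure ($\edensity(\rho)/\rho$ nondecreasing). This class does contain $u\ind_{\{u>Ca\}}$. Since inverse marginalisation maps $\pclass_{j+1}$ into $\pclass_j$ all the way down to $j=0$, this recovers exactly the missing level.

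Your integration by parts is aimed at the wrong weight. Write $\gamma_i=\gamma_s(\cdot-p_i(t))$. The expression $\int\edensity''(f_t)\nabla f_t\cdot\sum_iw_i\gamma_i\dot p_i$ does not symmetrise into pair terms, because the pieces proportional to $\nabla(\gamma_i\gamma_j)$, including the diagonal $i=j$, survive. Split
\[
(x-p_j)\gamma_i\gamma_j=-\tfrac s2\nabla(\gamma_i\gamma_j)+\tfrac12(p_i-p_j)\gamma_i\gamma_j .
\]
The gradient piece integrates by parts into $-\tfrac12$ times the time derivative of $\int H(f_t)$, where $H'(a)=a\edensity''(a)-\edensity'(a)$. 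Run the same computation for $\Phi$ with $\Phi'(a)=\edensity(a)/a$ in place of $\edensity$, and rearrange. This gives
\[
\frac{\d}{\d t}\int\edensity(f_t)\,\d x=\frac1{4s}\sum_{i,j}w_iw_j\int\frac{P(f_t)}{f_t^2}\,\gamma_i\gamma_j\,\d x\,\Bigl(-\frac{\d}{\d t}\norm{p_i(t)-p_j(t)}^2\Bigr).
\]
The weight $P(a)/a^2=\bigl(\edensity(a)/a\bigr)'$ is nonnegative precisely on $\pclass_0$.

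This is the paper's identity \eqref{eq: energydifferencepoly}. The paper proves it for arbitrary $\mu$ and merely continuous trajectories, via the Gaussian formula for $\int(\mu_t*\gamma_s)^m$ and Bernstein approximation (Lemma \ref{lem: polyareenough}). The identity genuinely needs the motion: for $T$ itself the $\pclass_0$ comparison is false (Remark \ref{rem: whynotnonnegativePressureinanyT}). That is why the extra level can only be bought in the lifted space.

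Once Step 2 is upgraded to $\pclass_0$, your Step 3 closes at $k=\lceil n/2\rceil$. For odd $n$, a single dummy coordinate replaces the fractional-calculus detour.
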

In fact, since $\pclass_{1}$ is exactly the class of nondecreasing pressure, Conjecture \ref{con: mainheat} holds in both dimensions $n=1,2$. We note that the theorem holds in all dimensions for the class $\pclass_{\infty} = \cap_{k=1}^{\infty} \pclass_{k}$. Power laws, which can be seen to be the building blocks of $\pclass_{\infty}$ by the Bernstein's theorem on completely monotone functions, were covered in \cite[Theorem 1.5]{AishwaryaLi25KP} up to a first moment condition on $\mu$. There are other interesting examples which Theorem \ref{thm: pressureclassmain} covers, as discussed in Examples \ref{examples: energies}, such as the Bose--Einstein H-functional in dimensions $n \leq 4$.

For contractions which can themselves be realised through a contracting motion, 
the dimension-dependent pressure condition disappears altogether. Recall that a
map $T:K\to\R^n$ defined on a subset $K \subseteq \R^n$ is a \textbf{continuous contraction} if there exists a homotopy
$\{T_t\}_{t\in[0,1]}$, with $T_0=I$ and $T_1=T$, such that $
    \norm{T_t(x)-T_t(y)}$ is nonincreasing in $t$ for every $x,y\in K$.

\begin{theorem}\label{thm: heatflowpositivepressure}
Let $T:K\to\R^n$ be a continuous contraction, and $\mu\in \mathcal 
{P}(K)$. Then, for any fixed $s>0$, the following hold.
\begin{enumerate}[label=(\roman*),ref=(\roman*)]
\item \begin{enumerate}[label=(\alph*),ref=(\roman{enumi})(\alph*)]
\item\label{item: positivepressuretransport} There exists a coupling
$\pi$ of $\mu * \gamma_s$ and $T_{\#}\mu * \gamma_s$ such that
$(\mu * \gamma_s)(x)
    \leq
    (T_{\#}\mu * \gamma_s)(y)
$
for $\pi$-almost every $(x,y)\in\R^n\times\R^n$.

\item\label{item: positivepressuresmoothtransport} If $T$ can be realised
by continuously contracting maps $\{T_t\}_{t\in[0,1]}$ with $C^1$
trajectories, and $\int \sup_{t\in[0,1]}
    \norm{\frac{\d}{\d t}T_t}^{2}\d\mu<\infty$
then the coupling in \ref{item: positivepressuretransport} can be chosen
to be induced by a volume contracting measurable map $S$ transporting
$\mu*\gamma_s$ to $T_\#\mu*\gamma_s$.
\end{enumerate}
\item\label{item: positivepressuretheorem} We have,
\begin{equation}\label{eq: heatflowpositivepressure}
    \energy{\mu*\gamma_s}
    \leq
    \energy{T_\#\mu*\gamma_s},
\end{equation}
for every internal energy $\energy$ in $\pclass_{0}$, that is, with nonnegative pressure.
\end{enumerate}
\end{theorem}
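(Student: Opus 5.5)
We prove the statement by following the densities $\rho_t = (T_t)_{\#}\mu * \gamma_s$ along the contracting homotopy. The aim is to show that $\rho_t$ evolves by a continuity equation $\partial_t \rho_t + \div(\rho_t v_t) = 0$ whose velocity field has nonpositive divergence, $\div v_t \leq 0$. Transport along such a field increases the density pointwise along trajectories: along a characteristic $X_t$ one has $\frac{\d}{\d t}\log\rho_t(X_t) = -\div v_t(X_t) \geq 0$. The flow map $S = X_1$ is then volume contracting and satisfies $\rho_0(x)\leq\rho_1(S(x))$, which gives \ref{item: positivepressuresmoothtransport}. The other parts follow from this, with \ref{item: positivepressuretransport} obtained by approximation.

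\emph{Step 1: the velocity field.} Assume first that the trajectories are $C^1$ and write $w_t(x)=\frac{\d}{\d t}T_t(x)$. Differentiating $\rho_t(y)=\int \gamma_s(y-T_t(x))\d\mu(x)$ in $t$ gives
\begin{equation*}
\partial_t\rho_t(y) = -\div\Bigl(\int \gamma_s(y-T_t(x))\, w_t(x)\d\mu(x)\Bigr).
\end{equation*}
So we may take $v_t(y)$ to be the conditional average of $w_t$ under the posterior $\pi_y(\d x)\propto\gamma_s(y-T_t(x))\d\mu(x)$.

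Computing the divergence, with $\grad_y\gamma_s(y-z)=-\frac{1}{s}(y-z)\gamma_s(y-z)$, one obtains a covariance expression:
\begin{equation*}
\div v_t(y) = -\frac{1}{s}\Bigl(\E_{\pi_y}\bigl[\langle y-T_t(X),w_t(X)\rangle\bigr]-\bigl\langle y-\E T_t(X),\,\E w_t(X)\bigr\rangle\Bigr).
\end{equation*}
By symmetrising over an independent copy $X'$ this becomes
\begin{equation*}
\div v_t(y) = \frac{1}{2s}\,\E_{\pi_y\otimes\pi_y}\bigl[\langle T_t(X)-T_t(X'),\,w_t(X)-w_t(X')\rangle\bigr] = \frac{1}{4s}\,\E\Bigl[\tfrac{\d}{\d t}\norm{T_t(X)-T_t(X')}^2\Bigr]\leq 0,
\end{equation*}
where the last inequality is exactly the continuous contraction hypothesis. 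This computation is the heart of the proof.

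\emph{Step 2: rigorous flow.} Under the moment condition $\int\sup_t\norm{w_t}^2\d\mu<\infty$, the field $v_t$ is smooth in $y$ and locally Lipschitz, with $\int\norm{v_t}^2\rho_t<\infty$. We therefore expect the characteristics to exist globally; the superposition principle or Ambrosio's well-posedness theory handles any subtle growth issues. Along characteristics, $\rho_t(X_t(x))$ is nondecreasing and the Jacobian $\det DX_t = \exp\bigl(\int_0^t \div v_\tau\bigr)\leq 1$. This proves \ref{item: positivepressuresmoothtransport}.

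\emph{Step 3: general case \ref{item: positivepressuretransport}.} For a merely continuous homotopy we approximate. First truncate $\mu$ to compactly supported measures $\mu_j$. Next replace the homotopy by a piecewise-linear-in-time interpolation through finitely many times $t_i$. Linear interpolation between $T_{t_i}$ and $T_{t_{i+1}}$ need not be contracting, so a better route is to observe that $T_{t_{i+1}}\circ T_{t_i}^{-1}$ is a contraction defined on the finite (after discretising $\mu$) set $T_{t_i}(\support\mu)$. Any contraction on a finite set can be realised as a continuous contraction with piecewise smooth trajectories, by the Gromov/Alexander-type result recorded earlier for finite configurations in $\R^{2n}$. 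Working in $\R^{2n}$ and projecting back is a concern, so I would instead try smoothing in time by convolution of the homotopy. Here we must check that mollification preserves monotonicity of pairwise distances, and it does not, since distances are not affine in $T$. This approximation step is the main obstacle.

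Once couplings $\pi_j$ with $\rho^{(j)}_0(x)\leq\rho^{(j)}_1(y)$ are constructed, we pass to a weak limit. The limit is tight, and the densities converge locally uniformly because Gaussian convolution is smoothing. The closed set $\{(x,y):\rho_0(x)\leq\rho_1(y)\}$ is therefore charged by the limit coupling, which gives \ref{item: positivepressuretransport}.

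\emph{Step 4: part \ref{item: positivepressuretheorem}.} Write $\energy(\rho)=\int \frac{U(\rho)}{\rho}\,\rho\d x$. The function $r\mapsto U(r)/r$ is nondecreasing precisely when $P\geq0$. Using the coupling $\pi$ from \ref{item: positivepressuretransport},
\begin{equation*}
\energy{\rho_0}=\int\frac{U(\rho_0(x))}{\rho_0(x)}\d\pi\leq\int\frac{U(\rho_1(y))}{\rho_1(y)}\d\pi=\energy{\rho_1},
\end{equation*}
with integrability handled via the definitions in Section \ref{sec: energypressureprelims}.
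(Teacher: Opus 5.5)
There is a genuine gap, and it lies exactly where the statement has its new content. Your Steps 1--2 handle $C^1$ trajectories correctly: they are the canonical-velocity-field argument of the earlier work, which the paper simply cites for (i)(b). You assert rather than prove well-posedness of the characteristics, but the paper also takes that from prior work.

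Step 3, however, is never carried out. You list three approximation strategies and observe yourself that each fails:
\begin{itemize}
\item piecewise-linear interpolation in time is not contracting;
\item mollification in time does not preserve monotonicity of pairwise distances;
\item the leapfrog lift to $\R^{2n}$ changes the ambient Gaussian, so marginalising back only gives comparisons for the smaller class $\pclass_{\lceil n/2\rceil}$, not $\pclass_0$ (this is precisely how the paper gets Theorem~\ref{thm: pressureclassmain}).
\end{itemize}
Nothing in the proposal shows that a merely continuous contraction can be approximated by $C^1$ continuous contractions in the same $\R^n$, and this is not a routine fact. Your Step 4 derives (ii) from (i)(a), so neither (i)(a) nor (ii) is established under the sole hypothesis of continuous trajectories.

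The missing idea is to avoid differentiating the trajectories at all. Differentiate only the pairwise squared distances: they are continuous and monotone, so they define nonnegative Stieltjes measures. The paper proves (ii) directly, in four steps.
\begin{enumerate}
\item Since $\rho_t\le C=(2\pi s)^{-n/2}$, Bernstein approximation of $Q(a)=U(a)/a$ on $[0,C]$ reduces the problem to polynomial $U$ with $P\ge0$ on $[0,C]$.
\item For $U(\rho)=\rho^m$, the identity $\sum_i\|x-y_i\|^2=m\|x-\bar y\|^2+\frac1m\sum_{i<j}\|y_i-y_j\|^2$ gives
$\int\rho_t^m=\frac{C^{m-1}}{m^{n/2}}\E\exp\bigl(-\frac{1}{2ms}\sum_{i<j}\|T_t(X_i)-T_t(X_j)\|^2\bigr)$, with $X_i$ i.i.d.\ of law $\mu$. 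This depends only on pairwise distances.
\item The Lebesgue--Stieltjes chain rule, applied with the random measures $\nu_{ij}$ induced by the nondecreasing functions $t\mapsto-\|T_t(X_i)-T_t(X_j)\|^2$, together with exchangeability, gives
\[
\mathcal U(\rho_1)-\mathcal U(\rho_0)=\frac{1}{4s}\E\int_{(0,1]}\int_{\R^n}\frac{P(\rho_t(x))}{\rho_t(x)^2}\,\gamma_s(x-T_t(X_1))\,\gamma_s(x-T_t(X_2))\,\d x\,\d\nu_{12}(t).
\]
\item This identity is linear in $U$, so it extends to all polynomial $U$, and it is nonnegative when $P\ge0$ on $[0,C]$.
\end{enumerate}
Item (i)(a) then follows from (ii) through the coupling characterisation of the $\pclass_0$ order (Lemma~\ref{lem: pc0stochastictransport}). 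This is the opposite direction to yours.

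You were in fact one step away. Integrating your divergence formula against the pressure, via $\frac{\d}{\d t}\mathcal U(\rho_t)=-\int P(\rho_t)\,\div v_t$, produces exactly this integrand with $\d\nu_{12}=-\frac{\d}{\d t}\|T_t(X_1)-T_t(X_2)\|^2\,\d t$. So you should pass to the energy identity, where only distance functions need to be differentiated (in the Stieltjes sense), rather than to the flow map, which forces you to approximate the homotopy.
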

Note that item \ref{item: positivepressuresmoothtransport}, although not presented in this form, follows from the work in \cite{AishwaryaLi25KP}. Item \ref{item: positivepressuretheorem} follows from this result in the smooth case. For this theorem, our contribution in the present paper is to establish items \ref{item: positivepressuretransport} and \ref{item: positivepressuretheorem}, under the sole assumption of continuity of the trajectories. The proof does not use trajectory fields as in \cite{AishwaryaLi25KP}, and makes the appearance of pressure explicit via a representation formula (equation \eqref{eq: energydifferencepoly}) that might be of further interest.

The appearance of pressure classes in Theorem \ref{thm: pressureclassmain} is explained by a simple interaction between
tensorisation and pressure. Given a probability density $\sigma$ on $\R^m$ and an energy density $\edensity$, we can take the marginalisation of $\edensity$ by $\sigma$,
\begin{equation}
    \Pi_{\sigma}\edensity(\rho)
    =
    \int_{\R^m}\edensity\big(\rho\sigma(y)\big) \d y.
\end{equation}
If an energy $\energy$ has energy density $\edensity$, we will denote by $\Pi_{\sigma}\energy$ the energy with density $\Pi_{\sigma}\edensity$. The key idea involved in proving Theorem \ref{thm: pressureclassmain} from Theorem \ref{thm: heatflowpositivepressure} is that marginalisation by a $2$-dimensional Gaussian introduces additional structure into an internal energy by raising its pressure class by $1$, and all members of the higher class that are non-trivial in our setting arise in this manner. 

\begin{theorem}\label{thm: BCheatflowintro}
    Let $\gamma_s^{(2)}$ denote the two-dimensional Gaussian density of variance $s$. Then, for every $j\geq0$, marginalisation defines a surjective map
    \begin{equation}
        \Pi_{\gamma_s^{(2)}}:
        \left\{
            \widetilde{\edensity}\in\pclass_j:
            \Pi_{\gamma_s^{(2)}}\widetilde{\edensity}
            \text{ is finite valued}
        \right\}
        \twoheadrightarrow
        \left\{
            \edensity\in\pclass_{j+1}:
            \edensity\text{ is continuous at }0
        \right\}.
    \end{equation}
    Moreover, every energy density in the domain is automatically continuous at $0$.
\end{theorem}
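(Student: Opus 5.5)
The proof rests on an explicit formula for $\Pi_{\gamma_s^{(2)}}$, which comes from the coarea/layer-cake structure of the two-dimensional Gaussian. Write $a=(2\pi s)^{-1}$, so that $\gamma_s^{(2)}(y)=a\,e^{-\norm{y}^2/2s}$ takes values in $(0,a]$. The superlevel set $\{\gamma_s^{(2)}\geq t\}$ is a disc of area $-2\pi s\log(t/a)$. Hence the pushforward of Lebesgue measure on $\R^2$ under $\gamma_s^{(2)}$ is $2\pi s\,\frac{\d t}{t}$ on $(0,a]$. This is the special feature of dimension $2$: the density of this pushforward is exactly scale invariant.

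Consequently, for any measurable $\widetilde{\edensity}$ for which the integral makes sense,
\begin{equation*}
\Pi_{\gamma_s^{(2)}}\widetilde{\edensity}(\rho)=2\pi s\int_0^{a}\widetilde{\edensity}(\rho t)\,\frac{\d t}{t}=2\pi s\int_0^{a\rho}\widetilde{\edensity}(u)\,\frac{\d u}{u}.
\end{equation*}
Finiteness of $\Pi_{\gamma_s^{(2)}}\widetilde{\edensity}$ therefore amounts to local integrability of $\widetilde{\edensity}(u)/u$ near $0$. This integrability immediately gives that $\edensity:=\Pi_{\gamma_s^{(2)}}\widetilde{\edensity}$ is absolutely continuous, with $\edensity(0^+)=0=\edensity(0)$. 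That settles the final assertion of Theorem \ref{thm: BCheatflowintro}.

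The algebraic heart is the identity $D\edensity(\rho)=2\pi s\,\widetilde{\edensity}(a\rho)$, where $D=\rho\frac{\d}{\d\rho}$; a priori it holds almost everywhere, or in the one-sided-derivative sense used in Section \ref{sec: energypressureprelims}. I would propagate it through the pressure. Writing $P_{\edensity}=(D-I)\edensity$, we get $P_{\edensity}=2\pi s\,\widetilde{\edensity}(a\,\cdot)-\edensity$. Since $D$ commutes with dilations,
\begin{equation*}
D P_{\edensity}(\rho)=2\pi s\,\big(D\widetilde{\edensity}-\widetilde{\edensity}\big)(a\rho)=2\pi s\,P_{\widetilde{\edensity}}(a\rho).
\end{equation*}
Iterating gives $D^{m+1}P_{\edensity}=2\pi s\,(D^mP_{\widetilde{\edensity}})(a\,\cdot)$. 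Membership in $\pclass_{j+1}$ consists of convexity of $\edensity$ (equivalently $DP_{\edensity}\geq0$) together with $D^{j+1}P_{\edensity}\geq0$. Membership in $\pclass_j$ consists of $P_{\widetilde{\edensity}}\geq0$ when $j=0$, and otherwise of convexity together with $D^jP_{\widetilde{\edensity}}\geq0$. These conditions correspond exactly under the shift $m\mapsto m+1$. This proves that $\Pi_{\gamma_s^{(2)}}$ maps the stated domain into the stated target.

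For surjectivity, take $\edensity\in\pclass_{j+1}$ continuous at $0$ and define
\begin{equation*}
\widetilde{\edensity}(t)=\frac{1}{2\pi s}\,(D\edensity)(t/a),
\end{equation*}
using, say, the right derivative, which exists everywhere by convexity. Three checks are needed.
\begin{enumerate}[label=(\roman*)]
\item $\widetilde{\edensity}(0)=0$. This means $\rho\,\edensity'(\rho^{+})\to0$ as $\rho\to0$, which should follow from convexity together with $\edensity(0^+)=0$ by comparing the derivative with difference quotients on $[\rho/2,\rho]$ and on $[\rho,2\rho]$.
\item $\widetilde{\edensity}(u)/u$ is integrable near $0$ and $\Pi_{\gamma_s^{(2)}}\widetilde{\edensity}=\edensity$. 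Both follow from $\int_0^{r}\edensity'(v)\,\d v=\edensity(r)-\edensity(0^+)=\edensity(r)$, which uses absolute continuity of convex functions and continuity at $0$.
\item $\widetilde{\edensity}\in\pclass_j$. This is the identity of the previous paragraph read backwards.
\end{enumerate}

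The main obstacle I anticipate is regularity. The pressure classes are defined in Definition \ref{def:pressureclasses} through finite differences of $P$ at multiplicative steps $c_1,\dots,c_k\geq1$, not through derivatives. To avoid differentiating non-smooth $\edensity$, I would rewrite the identity multiplicatively. Since $D$ is differentiation in $\log\rho$, the relation $\edensity(\rho)=2\pi s\int_{-\infty}^{\log(a\rho)}\widetilde{\edensity}(e^{\tau})\,\d\tau$ turns each first difference $P_{\edensity}(c\rho)-P_{\edensity}(\rho)$ into $2\pi s\int_{\log(a\rho)}^{\log(ac\rho)}P_{\widetilde{\edensity}}(e^{\tau})\,\d\tau$. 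Higher iterated differences of $P_{\edensity}$ then become integrals of iterated differences of $P_{\widetilde{\edensity}}$ over boxes in $\log$-space. The converse direction, recovering the difference inequalities for $\widetilde{\edensity}$ from those for $\edensity$, is obtained by dividing by $\log c$ and letting $c\downarrow1$ using one-sided derivatives. Alternatively, it may be read off from Proposition \ref{prop: regularirregularsame}, which I would invoke to pass between regular and irregular descriptions of the classes.
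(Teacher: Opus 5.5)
Your route is the paper's: the polar-coordinate formula for $\Pi_{\gamma_s^{(2)}}$, the inverse $\widetilde{U}=\rho\,U'_+(\rho)$ (the paper's $\Pi^{-1}U=U+P$), the log-space integral for $\Delta_cP_U$ with the remaining differences commuted through, and a $c\downarrow1$ limit for the converse. Two steps are not right as written.

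\textbf{The final assertion.} It concerns the elements $\widetilde{U}$ of the domain, not their images. What you prove is that $\Pi_{\gamma_s^{(2)}}\widetilde{U}$ is continuous at $0$. That is needed to land in the target, but it is a different statement. Integrability of $f(u)=\widetilde{U}(u)/u$ near $0$ alone does not force $\widetilde{U}(u)\to0$, since $\widetilde{U}$ could have thin spikes. You must also use that $f$ is nondecreasing, which holds on every $\pclass_j$; for $j\geq1$ this is because convexity together with $\widetilde{U}(0)=0$ makes chord slopes from the origin nondecreasing. Then either $\lim_{u\downarrow0}f(u)$ is finite and $\widetilde{U}(u)=uf(u)\to0$, or the limit is $-\infty$. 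In the latter case, for small $u$ we have $\frac u2|f(u)|\leq\int_{u/2}^{u}|f(t)|\,\d t\to0$. This is the fact the paper invokes.

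\textbf{Convexity of the preimage.} For $j\geq1$, surjectivity requires showing that $\widetilde{U}=\rho\,U'_+(\rho)$ is convex on $[0,\infty)$. This is part of membership in $\pclass_j$, and it is needed before $P_{\widetilde{U}}$ is even defined. In your smooth bookkeeping it is the condition $D^2P_U\geq0$. For non-smooth $U$, however, your ``divide by $\log c$ and let $c\downarrow1$'' step only recovers difference inequalities for a pressure that already presupposes this convexity. Proposition \ref{prop: regularirregularsame} cannot be invoked either, since it assumes $C^{k+1}$ regularity.

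The paper supplies the missing argument. By nesting, $U\in\pclass_2$, so $\Delta_cP_U$ is nondecreasing. For $0<p<q$,
\[
\frac{\Delta_cU(q)}{q}-\frac{\Delta_cU(p)}{p}=\int_p^q\frac{\Delta_cP_U(w)}{w^2}\,\d w .
\]
This shows that the line through $(p,\Delta_cU(p))$ with slope $(\Delta_cU(p)+\Delta_cP_U(p))/p$ supports the graph of $\Delta_cU$. Hence each $\Delta_cU$ is convex, and so is the pointwise limit $\rho\,U'_+(\rho)=\lim_{c\downarrow1}\Delta_cU(\rho)/(c-1)$. Once convexity is in hand, your reversed step goes through exactly as you describe: identify $P_{\widetilde{U}}$ with $\lim_{c\downarrow1}\Delta_cP_U/(c-1)$ (up to your normalisation) and commute the remaining differences with this limit.
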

The image of $\Pi_{\gamma_s^{(2)}}$ indeed covers all non-trivial internal energies for our purposes because if a convex $\edensity$ is not continuous at $0$, then a simple argument outlined in the proof of Theorem \ref{thm: pressureclassmain} shows that the corresponding internal energy $\energy$ must assign the value $-\infty$ to all bounded strictly positive densities on $\R^n$.

Theorem \ref{thm: pressureclassmain} now follows by combining Theorem
\ref{thm: BCheatflowintro} with a classical lifting trick, sometimes
called the \textit{leapfrog lemma}: every contraction in $\R^n$ can be
realised as a continuous contraction after adding $n$ auxiliary dimensions,
in $\R^{2n}$. Applying Theorem \ref{thm: heatflowpositivepressure} in the
enlarged space and then marginalising the auxiliary Gaussian coordinates
in pairs, with one additional dummy dimension when $n$ is odd, raises the
pressure class by $\lceil n/2\rceil$. The same reasoning shows more generally that if a particular $1$-Lipschitz
map $T$ on $\R^n$ can be lifted to a continuous contraction by adding at
most two auxiliary dimensions, then Conjecture \ref{con: mainheat} holds
for $T$.  

\subsection{Applications to the Kneser--Poulsen problem}
The questions explored in the present work were first motivated by the following problem in discrete geometry, attributed to Kneser \cite{Kneser55} and Poulsen \cite{Poulson54}.

\begin{conjecture}[Kneser--Poulsen conjecture]\label{con: KPsameradii}
Let $\{x_{1} , \ldots , x_{k}\}$ and $\{y_{1}, \cdots , y_{k}\}$ be two sets of points in $\mathbb{R}^n$ such that $\Vert y_{i} - y_{j} \Vert_{2} \leq \Vert x_{i} - x_{j} \Vert_{2}$ for all $i, j \in \{1, \ldots, k\}$. If $r> 0$, then we have
\begin{equation} \label{eq: KPsameradii}
\vol{\bigcup_{i=1}^{k} \ball (y_{i}, r)} \leq \vol{ \bigcup_{i=1}^{k} \ball (x_{i}, r)},
\end{equation}
where $\vol{\cdot}$ is the Lebesgue measure on $(\R^n, \Vert \cdot \Vert_{2})$ and $\mathcal{B}(x,r)$ is the closed ball of radius $r$ centred at $x$.
\end{conjecture}
The Kneser--Poulsen conjecture is known in full generality only in $\R^2$ (\cite{BezdekConnelly02}). The proof of Bezdek and Connelly in \cite{BezdekConnelly02} crucially uses a result of Csik\'os (\cite{Csiskos98}) establishing Conjecture \ref{con: KPsameradii} for continuous contractions.  Beyond the plane, it is known in special cases (see, for example, \cite{BezdekNaszodi18,Gorbovickis18}). The reader may consult excellent survey articles \cite{KuperbergToth22,BezdekLangiNaszodi26} for a more complete bibliography of contributions, particularly those preceding Csik\'os's result (such as \cite{Bollobas68,BernSahai98,Capoyleas96}).

By approximating compact sets from within by finite sets, and observing that
$x_i \mapsto y_i$ defines a $1$-Lipschitz map, one can equivalently restate
Conjecture \ref{con: KPsameradii} in the following form.

\begin{conjecture} \label{con: KPtubeform}
For every contraction (that is, a $1$-Lipschitz map)
$T:(K,\Vert\cdot\Vert_2)\to(\R^n,\Vert\cdot\Vert_2)$ defined on a compact set
$K\subseteq\R^n$, and every $r>0$, we have
\begin{equation} \label{eq: KPtubeform}
    \vol{T[K]+r\ball}\leq \vol{K+r\ball},
\end{equation}
where $\ball=\ball(0,1)$ denotes the Euclidean unit ball.
\end{conjecture}

This formulation also suggests variants of the Kneser--Poulsen problem in which one imposes geometric structure on the set $K$. To the best of our knowledge, relatively little is known in this direction, even for natural classes such as convex bodies or compact submanifolds of $\R^n$, when the map $T$ is allowed to be an arbitrary contraction.

The internal energy inequalities we develop have direct geometric consequences. The two pressure conditions lead to two different conclusions: nondecreasing pressure recovers the Kneser--Poulsen inequality, while the larger class of nonnegative pressures yields volume of annular regions.

\begin{theorem} \label{thm: heattoKPintro}
Let $K\subseteq\R^n$ be compact and let $T:K\to\R^n$ be a $1$-Lipschitz map.
\begin{enumerate}[label=(\roman*),ref=(\roman*)]
    \item Suppose that
    \begin{equation}
        \energy{\mu*\gamma_s}
        \leq
        \energy{T_\#\mu*\gamma_s}
    \end{equation}
    for every $\mu \in \mathcal{P}(K)$, every $s>0$, and every internal energy
    $\energy$ in $\pclass_{1}$, that is, with \textbf{nondecreasing pressure}. Then
    \begin{equation}
        \vol{T[K]+r\ball}
        \leq
        \vol{K+r\ball}
    \end{equation}
    for every $r>0$.
    \item If the same internal energy inequality holds for every internal energy $\energy$ in $\pclass_{0}$, that is, with \textbf{nonnegative pressure}, then
    \begin{equation}
        \vol{
        \left(T[K]+r_2\ball\right)
        \setminus
        \left(T[K]+r_1\ball\right)}
        \leq
        \vol{
        \left(K+r_2\ball\right)
        \setminus
        \left(K+r_1\ball\right)}
    \end{equation}
    for every $0<r_{1}<r_{2}$.
\end{enumerate}
\end{theorem}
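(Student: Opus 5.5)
The plan is to detect $\vol{K+r\ball}$ through a Laplace/Varadhan-type asymptotic of $\mu*\gamma_s$ as $s\to0$, evaluated with suitably normalised piecewise-linear energy densities of the relevant pressure class. The choice $t=t_s(r)=(2\pi s)^{-n/2}e^{-r^2/(2s)}$ is made so that $\{\mu*\gamma_s>t\}$ approximates $K+r\ball$.

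\emph{Choice of measure.} Fix any $\mu\in\mathcal P(K)$ with $\support\,\mu=K$. Then $T_\#\mu$ has support $T[K]$, because $T[K]$ is compact and $T$ is continuous. By compactness, $m(\delta):=\inf_{y\in K}\mu(\ball(y,\delta))>0$ for every $\delta>0$. The elementary bounds
\begin{equation*}
m(\delta)\,(2\pi s)^{-n/2}e^{-(d(x,K)+\delta)^2/(2s)}\leq \mu*\gamma_s(x)\leq (2\pi s)^{-n/2}e^{-d(x,K)^2/(2s)}
\end{equation*}
give $\ind_{\{\mu*\gamma_s>t_s(r)\}}\to\ind_{\{d(\cdot,K)<r\}}$ pointwise off the set $\{d(\cdot,K)=r\}$. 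That set is Lebesgue-null, since $|\grad d(\cdot,K)|=1$ a.e. off $K$ and by the coarea formula. The same statements hold for $T[K]$ and $T_\#\mu$, with the analogous $m(\delta)$ for $T_\#\mu$.

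\emph{Part (i).} Take $\edensity_t(\rho)=-\min(\rho,t)/t$. It is convex and satisfies $\edensity_t(0)=0$, so $\edensity_t\in\pclass_1$; in fact $\edensity_t(\rho)/\rho=-\min(1/\rho,1/t)$ is nondecreasing, so it also lies in $\pclass_0$. The hypothesis gives
\begin{equation*}
\int\frac{\min(T_\#\mu*\gamma_s,t)}{t}\,\d x\leq\int\frac{\min(\mu*\gamma_s,t)}{t}\,\d x .
\end{equation*}
We then show that, with $\rho_s=\mu*\gamma_s$,
\begin{equation*}
\int\frac{\min(\rho_s,t_s(r))}{t_s(r)}\,\d x=\vol{\{\rho_s>t_s(r)\}}+\int_{\{\rho_s\leq t_s(r)\}}\frac{\rho_s}{t_s(r)}\,\d x\longrightarrow\vol{K+r\ball}
\end{equation*}
as $s\to0$. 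The first term converges by dominated convergence: the sets $\{\rho_s>t_s(r)\}$ lie in $K+r\ball$ by the upper bound above, and we use the null-boundary fact. For the second term, the integrand is at most $\min\bigl(1,e^{-(d^2-r^2)/(2s)}\bigr)$ with $d=d(x,K)$. This bound tends to $0$ pointwise for $d>r$ and is dominated, for $s\leq1$, by the integrable function $\ind_{\{d\leq r+1\}}+e^{-(d^2-r^2)/2}\ind_{\{d>r+1\}}$. On the region $d<r$, the set $\{\rho_s\leq t\}$ shrinks to a null set, which we already handled. Combining the two sides yields $\vol{T[K]+r\ball}\leq\vol{K+r\ball}$.

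\emph{Part (ii).} Let $r_1<r_2$, so that $t_1=t_s(r_1)>t_2=t_s(r_2)$. Use
\begin{equation*}
\edensity(\rho)=\frac{\min(\rho,t_2)}{t_2}-\frac{\min(\rho,t_1)}{t_1}.
\end{equation*}
We have $\edensity(0)=0$, and $\edensity(\rho)/\rho$ equals $0$ on $(0,t_2]$, equals $1/\rho-1/t_1$ (increasing) on $[t_2,t_1]$, and is the constant $1/t_2-1/t_1$ on $[t_1,\infty)$. Hence $\edensity(\rho)/\rho$ is nondecreasing, i.e.\ $\edensity\in\pclass_0$. Note that this $\edensity$ is not convex, which is exactly why the stronger hypothesis is needed here. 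The hypothesis, together with the two limits established in part (i), gives
\begin{equation*}
\vol{T[K]+r_2\ball}-\vol{T[K]+r_1\ball}\leq\vol{K+r_2\ball}-\vol{K+r_1\ball}.
\end{equation*}
Since $K+r_1\ball\subseteq K+r_2\ball$ and likewise for $T[K]$, this difference of volumes is the annulus inequality.

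\emph{Main obstacle.} The delicate step is the $s\to0$ asymptotics. Specifically, we must make the $\delta$-loss in the lower bound disappear by sending $\delta\to0$ after $s\to0$ for fixed $r$, and we must control the tail integral $\int_{\{\rho_s\leq t\}}\rho_s/t\,\d x$ uniformly. I would handle both directly through the explicit two-sided bounds above and dominated convergence, as sketched, rather than appealing to a general large-deviation principle.
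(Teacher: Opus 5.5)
Your part (i) is correct and is the paper's argument specialised to constant radius. For constant $r$, the paper's tilted measure $\mu_s$ is just $\mu$ and $N_s=e^{r^2/(2s)}$, so its density $-\min\{(2\pi s)^{n/2}N_s a,1\}$ is exactly your $-\min(\rho,t_s(r))/t_s(r)$. Your split into a superlevel set plus a tail, with explicit two-sided bounds, is a more detailed version of the paper's single dominated-convergence step.

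Part (ii), however, has a sign error that invalidates it as written. Take $t_2<t_1$ and your $U(\rho)=\min(\rho,t_2)/t_2-\min(\rho,t_1)/t_1$. Then $U(\rho)/\rho$ equals:
\begin{itemize}
\item $1/t_2-1/t_1>0$ on $(0,t_2]$;
\item $1/\rho-1/t_1$ on $[t_2,t_1]$, which is \emph{decreasing};
\item $0$ on $[t_1,\infty)$.
\end{itemize}
So $U(\rho)/\rho$ is nonincreasing and nonconstant. Hence $U\notin\pclass_0$, and the hypothesis cannot be applied to it. Moreover, if you did apply it, your part (i) limits would yield $\vol{K+r_2\ball}-\vol{K+r_1\ball}\leq\vol{T[K]+r_2\ball}-\vol{T[K]+r_1\ball}$. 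That is the reverse of what you state, so your final display does not follow from your own setup.

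The fix is to use $-U=\min(\rho,t_1)/t_1-\min(\rho,t_2)/t_2$, i.e.\ $U_{t_2}-U_{t_1}$ in your part (i) notation. This is precisely the paper's $\Theta_{\lambda_s(-c)}-\Theta_{\lambda_s(c)}$ with $r_1=\sqrt{r^2-c}$ and $r_2=\sqrt{r^2+c}$. Its ratio $-U(\rho)/\rho$ equals $1/t_1-1/t_2<0$ on $(0,t_2]$, increases as $1/t_1-1/\rho$ on $[t_2,t_1]$, and equals $0$ on $[t_1,\infty)$, so $-U\in\pclass_0$. The hypothesis together with your limits then gives $\vol{K+r_1\ball}-\vol{K+r_2\ball}\leq\vol{T[K]+r_1\ball}-\vol{T[K]+r_2\ball}$, which is the annulus inequality. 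With that correction your proof coincides with the paper's.
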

\begin{remark} \label{rem: KPforvaryingradiiIntro}
    We prove a more general version of the above theorem in which the constant radius is replaced by an arbitrary continuous function $r:K\to(0,\infty)$. Moreover, in the constant radius case, it suffices to assume the energy inequalities for a single probability measure
    $\mu\in\mathcal{P}(K)$ with full support.
\end{remark}

The proof of this result is based on the observation that the volume of every neighbourhood of the support $K$ of $\mu$ can be recovered from the knowledge of $\int \edensity{\mu * \gamma_{s}} \d x$ for all convex $\edensity$ and all small $s>0$. More precisely, in the setting of Theorem \ref{thm: heattoKPintro}, suppose $\gamma_{s} = C_{s} e^{- \frac{1}{2s}\norm{x}^2}$, and $\edensity_{s}(\rho) = - \min \{ C^{-1}_{s} e^{\frac{1}{2s}r^2} \rho , 1 \}$. Then we show that $ - \edensity_{s} (\mu * \gamma_{s})$ appropriately converges to the indicator of $K + r \ball$.

The manner in which we exploit the encoding of the distance function in the heat kernel is more general; see, for example, the pioneering work \cite{Varadhan67}. For this reason we believe that our methods could be helpful in exploring Kneser--Poulsen-type inequalities beyond Euclidean spaces, such as on Riemannian manifolds, although there are curvature obstructions to Conjecture \ref{con: KPtubeform} holding in full generality in this setting (\cite{CsikosKunszenti-Kovacs10, CsikosHorvath14}). We also note that the use of Gaussian convolution to recover geometric quantities in Euclidean space has a rich lineage; for instance, see \cite{Ledoux94}. 

Using the full version of Theorem \ref{thm: heattoKPintro} we can get, for example, the following geometric results in a straightforward way. 

\begin{corollary}\label{cor: classicalKPconsequencesIntro}
\begin{enumerate}[label=(\roman*),ref=(\roman*)]
\item[] 
    \item\label{item: CsikosKPIntro}
    Let $x_1,\ldots,x_k\in\R^n$ undergo a continuous contraction to
    $y_1,\ldots,y_k\in\R^n$. Then, for arbitrary $r_1,\ldots,r_k>0$,
    \begin{equation}
        \vol{\bigcup_{i=1}^{k}\ball(y_i,r_i)}
        \leq
        \vol{\bigcup_{i=1}^{k}\ball(x_i,r_i)}.
    \end{equation}
    This recovers the Kneser--Poulsen theorem for continuous contractions
    with arbitrary radii, originally proved by Csik\'os \cite{Csiskos98}.

    \item\label{item: surfaceKPIntro}
Suppose $n\geq 2$. Let $x_1,\ldots,x_k\in\R^n$ undergo a continuous contraction to
$y_1,\ldots,y_k\in\R^n$, and let $r_1,\ldots,r_k>0$. For each $i$, let
$F_i^x\subseteq\partial\ball(x_i,r_i)$ and
$F_i^y\subseteq\partial\ball(y_i,r_i)$ denote the exposed parts of the respective spheres, namely the portions lying on the boundary of the corresponding union of balls. If several of the spheres coincide, we assign their common exposed part to a single index, so that it is counted only once. Then
\begin{equation}
    \sum_{i=1}^{k}
    \frac{\mathcal H^{n-1}(F_i^y)}{r_i}
    \leq
    \sum_{i=1}^{k}
    \frac{\mathcal H^{n-1}(F_i^x)}{r_i}.
\end{equation}
In particular, when $r_1=\cdots=r_k=r$, the exposed parts partition the boundary up to an $\mathcal H^{n-1}$-null set, and hence the ordinary surface area of the union is nonincreasing under the continuous contraction. This recovers the corresponding result of Bezdek and Connelly \cite[Theorem~3]{BezdekConnelly02}.

    \item\label{item: planarKPIntro}
    Let $x_1,\ldots,x_k,y_1,\ldots,y_k\in\R^2$ satisfy
    \begin{equation}
        \norm{y_i-y_j}
        \leq
        \norm{x_i-x_j}
    \end{equation}
    for every $i,j$. Then, for arbitrary $r_1,\ldots,r_k>0$,
    \begin{equation}
        \vol{\bigcup_{i=1}^{k}\ball(y_i,r_i)}
        \leq
        \vol{\bigcup_{i=1}^{k}\ball(x_i,r_i)}.
    \end{equation}
    This recovers the union part of the planar Kneser--Poulsen theorem
    of Bezdek and Connelly \cite[Corollary~1]{BezdekConnelly02}.
\end{enumerate}
\end{corollary}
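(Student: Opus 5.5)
The corollary follows by feeding Theorem \ref{thm: heatflowpositivepressure} and Theorem \ref{thm: maindim2} into the variable-radius version of Theorem \ref{thm: heattoKPintro} announced in Remark \ref{rem: KPforvaryingradiiIntro}. In that version a continuous $r:K\to(0,\infty)$ replaces the constant radius, and $K+r\ball$ is read as $\bigcup_{x\in K}\ball(x,r(x))$. Throughout we take $K=\{x_1,\dots,x_k\}$, assumed first to consist of distinct points. The radius function is $r(x_i)=r_i$, continuous because $K$ is discrete. The map is $T(x_i)=y_i$.

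For \ref{item: CsikosKPIntro}, the contracting motion makes $T$ a continuous contraction on $K$ in the sense of Theorem \ref{thm: heatflowpositivepressure}. Item \ref{item: positivepressuretheorem} of that theorem then gives the energy inequality for every $\mathcal U\in\pclass_0\supseteq\pclass_1$ and every $\mu\in\mathcal P(K)$. Part (i) of the variable-radius Theorem \ref{thm: heattoKPintro} yields the stated comparison of unions of balls.

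One subtlety is that the variable-radius version of $T[K]+r\ball$ should mean $\bigcup_i \ball(y_i,r_i)$, with the radius transported along $T$. This is unambiguous when $T$ is injective. If $y_i=y_j$ with $r_i\ne r_j$, I would perturb: replace $y_j$ by $y_j+\varepsilon v$ along the last stage of the motion while keeping it contracting. That is not obviously possible, so the alternative is to check that the heat-kernel recovery argument handles non-injective $T$ directly, by attaching radii to $\mu$-mass rather than to image points. Repeated points $x_i=x_j$ are dealt with the same way.

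For \ref{item: planarKPIntro}, Theorem \ref{thm: maindim2} gives the energy inequality for all of $\pclass_1$ for any $1$-Lipschitz $T$ on $\R^2$. The map $x_i\mapsto y_i$ extends to a $1$-Lipschitz map of $\R^2$ by Kirszbraun's theorem, and the extension is not even needed, since Theorem \ref{thm: heattoKPintro} is stated on $K$. Part (i) of the theorem then concludes.

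For \ref{item: surfaceKPIntro}, I would use the annulus statement, part (ii) of the variable-radius Theorem \ref{thm: heattoKPintro}. This is available because Theorem \ref{thm: heatflowpositivepressure} gives the inequality for $\pclass_0$. Take radii $r_i$ and $(1+\varepsilon)r_i$, that is, the functions $r$ and $(1+\varepsilon)r$. This gives
\[
V_y((1+\varepsilon)r)-V_y(r)\le V_x((1+\varepsilon)r)-V_x(r),
\]
where $V_x(\rho)$ denotes the volume of $\bigcup_i\ball(x_i,\rho_i)$ and $V_y$ likewise. Divide by $\varepsilon$ and let $\varepsilon\to0^+$.

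The main obstacle is computing this one-sided derivative. For a finite union of balls I expect it to equal $\sum_i r_i\,\mathcal H^{n-1}(F_i)$, because scaling all radii by $1+\varepsilon$ pushes the exposed part of the $i$-th sphere outward by $\varepsilon r_i$. The non-exposed parts contribute nothing to first order, since the boundary of the union is piecewise spherical and lower-dimensional intersections are negligible. Coincident spheres must be counted once, which matches the convention in the statement.

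This derivative yields weights $r_i$ rather than the claimed $1/r_i$. So instead I would scale by $\rho_i=\sqrt{r_i^2+2t}$, equivalently use the radius function $r_i(t)$ with $\frac{d}{dt}\rho_i=1/\rho_i$. The derivative at $t=0$ is then $\sum_i\mathcal H^{n-1}(F_i)/r_i$, as required. Finally, in the equal-radius case the exposed parts tile the boundary, which gives the surface-area statement.
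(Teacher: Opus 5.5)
Your proposal is correct and is essentially the paper's proof: parts (i) and (iii) feed Theorem \ref{thm: heatflowpositivepressure} and Theorem \ref{thm: maindim2} respectively into part \ref{item: heattoKPunion} of the variable-radius Theorem \ref{thm: heattoKPmain}, and part (ii) differentiates the annulus inequality of Theorem \ref{thm: heattoKPmain}\ref{item: heattoKPlayers} in a common shift of the squared radii, with your one-sided difference quotient in place of the paper's symmetric one for $u\mapsto\vol{\bigcup_i\ball(z_i,\sqrt{r_i^2+u})}$ at $u=0$; the theorem's left side is $\bigcup_{x\in K}\ball(T(x),r(x))$, which attaches radii to source points, so your non-injectivity worry does not arise. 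One correction: Theorem \ref{thm: heattoKPmain}\ref{item: heattoKPlayers} only compares radii $\sqrt{r(x)^2\pm c}$, because both truncation energies must act on the same tilted measure $\mu_s\propto e^{r(x)^2/(2s)}\mu$, so your intermediate comparison of $r$ with $(1+\varepsilon)r$ is not available; your final pair $r,\sqrt{r^2+2t}$ is admissible (take base radius $\sqrt{r^2+t}$ and $c=t$), and it is this constraint, not only the desired outcome, that produces the $1/r_i$ weights.
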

The same framework also recovers further known results, such as the Kneser--Poulsen theorem for coordinatewise contractions (\cite[Theorem~1.3]{BezdekNaszodi18}). We do not pursue these extensions here, since the examples above already illustrate the geometric consequences of the internal energy inequalities that are most relevant to the present work.

\begin{remark}\label{rem: whynotnonnegativePressureinanyT}
Conjecture \ref{con: mainheat} cannot be strengthened by replacing nondecreasing pressure with nonnegative pressure. Indeed, such a strengthening, together with Theorem \ref{thm: heattoKPintro}, would imply monotonicity of the surface area of unions of congruent balls under arbitrary $1$-Lipschitz maps. This fails already in the plane as exhibited by the example of Habicht and Kneser reproduced in \cite[Figure 2]{BezdekConnelly02}. Thus, the distinction between the two pressure classes is reflected in the corresponding geometry.
\end{remark}

\subsection{Gaussian characterisation}

We now turn to a natural question underlying the preceding discussion. Why the Gaussian? It will not escape the curious reader's notice that neither Theorem \ref{thm: BCheatflowintro} nor Theorem \ref{thm: heattoKPintro} by itself seems to single out Gaussian convolution, and analogous statements may hold for slightly more general kernels. Gaussian convolution is exceptionally well behaved with respect to a wide range of analytic tools, including Fourier analysis, semigroup methods, and stochastic analysis. This flexibility is one reason to hope that the difficulty posed by arbitrary contractions, perhaps the main obstruction to the Kneser--Poulsen conjecture, may be more tractable in the present framework.

Below we present more concrete evidence for the special role of the Gaussian kernel in this problem. In the smooth continuous contraction argument of \cite{AishwaryaLi25KP}, if $\mu_t=(T_t)_\#\mu$ has velocity field $v_t$, then
\begin{equation}
    \widetilde v_t
    =
    \frac{(v_t\mu_t)*g}{\mu_t*g}
\end{equation}
is a canonical velocity field for the convolved curve $\{\mu_t*g\}_{t\in[0,1]}$. When $g$ is Gaussian, it is shown that $\div\widetilde v_t\leq0$, which yields the volume-contracting transport in item \ref{item: positivepressuresmoothtransport}. The following result shows that, in dimensions $n\geq2$, this infinitesimal volume contraction characterises the Gaussian.

\begin{theorem}\label{thm: GaussianCharacterisationIntro}
    Let $n\geq2$, and let $g$ be a positive $C^2$ probability density on $\R^n$. Then the following are equivalent.
    \begin{enumerate}[label=(\roman*),ref=(\roman*)]
        \item\label{item: ourdispersivecondition} For every compactly supported probability measure $\mu$ and every smooth continuous contraction $\{T_t\}_{t\in[0,1]}$, the canonical velocity field $\widetilde v_t$ satisfies
        \begin{equation}
            \div\widetilde v_t\leq0.
        \end{equation}
        \item\label{item: itisgaussian} Up to translation, $g$ is an isotropic Gaussian density.
    \end{enumerate}
\end{theorem}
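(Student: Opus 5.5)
The plan is to derive an explicit formula for $\div\widetilde v_t$ for a general kernel, which exposes an integrand pairing the relative velocities of two points of $\mu_t$ with a difference of gradients of $\log g$. The implication \ref{item: itisgaussian}$\Rightarrow$\ref{item: ourdispersivecondition} will then follow from the monotonicity of relative distances under a continuous contraction. For \ref{item: ourdispersivecondition}$\Rightarrow$\ref{item: itisgaussian} I will test the hypothesis on two-point measures.

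\textbf{Step 1 (the formula).} Write $g=e^{-\varphi}$ with $\varphi\in C^2$. Set $f=\mu_t*g$ and $w=(v_t\mu_t)*g$. Differentiating under the integral gives
\begin{equation*}
\div w(x)=-\int v_t(y)\cdot\grad\varphi(x-y)\,g(x-y)\d\mu_t(y),
\qquad
\grad f(x)=-\int \grad\varphi(x-z)\,g(x-z)\d\mu_t(z).
\end{equation*}
Since $\div\widetilde v_t=(f\div w-w\cdot\grad f)/f^2$, expanding both products as double integrals and symmetrising in $(y,z)$ should give
\begin{equation*}
f(x)^2\div\widetilde v_t(x)=\frac12\iint g(x-y)g(x-z)\,\big(v_t(y)-v_t(z)\big)\cdot\big(\grad\varphi(x-z)-\grad\varphi(x-y)\big)\d\mu_t(y)\d\mu_t(z).
\end{equation*}

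\textbf{Step 2 (Gaussian $\Rightarrow$ (i)).} If $g$ is an isotropic Gaussian of variance $s$ up to translation, then $\grad\varphi(p)=(p-c)/s$. The last factor in the formula becomes $(y-z)/s$, so the integrand is a positive weight times $(v_t(y)-v_t(z))\cdot(y-z)/s$. For a continuous contraction, $\norm{T_t(a)-T_t(b)}^2$ is nonincreasing in $t$. Differentiating gives $(v_t(y)-v_t(z))\cdot(y-z)\le0$ on the support of $\mu_t\otimes\mu_t$. Hence $\div\widetilde v_t\le0$.

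\textbf{Step 3 ((i) $\Rightarrow$ functional equation).} Take $\mu=\tfrac12(\delta_a+\delta_b)$ and the linear motions $T_t(a)=a+tu_a$, $T_t(b)=b+tu_b$. Any pair $u_a,u_b$ with $u\cdot(a-b)<0$, where $u:=u_a-u_b$, gives a smooth continuous contraction on $K=\{a,b\}$ for small $t$; after rescaling time, $t\in[0,1]$. At $t=0$ the formula reduces to a positive multiple of $u\cdot d$, where $d:=\grad\varphi(x-b)-\grad\varphi(x-a)$. Condition (i) therefore says $u\cdot d\le0$ for every $u$ in the open half-space $\{u\cdot(a-b)<0\}$, and by continuity for its closure. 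A vector that is nonpositive against a whole closed half-space must be a nonnegative multiple of its inner normal, so $d=\lambda(a-b)$ with $\lambda\ge0$. Writing $p=x-a$ and $q=x-b$, and using that $x,a,b$ are arbitrary, we obtain
\begin{equation*}
\grad\varphi(p)-\grad\varphi(q)=\lambda(p,q)\,(p-q),\qquad \lambda(p,q)\ge0,\quad\text{for all }p,q\in\R^n.
\end{equation*}

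\textbf{Step 4 (solving the functional equation).} Let $q\to p$ along each direction. Then $D^2\varphi(p)$ maps every vector to a multiple of itself, so $D^2\varphi(p)=\lambda(p)I$. To show $\lambda$ is constant when $n\ge2$, I would first try the three-point argument. Apply the relation to the pairs $(p,q)$, $(q,r)$ and $(p,r)$ for non-collinear $p,q,r$, and subtract. Linear independence of $p-q$ and $q-r$ then forces $\lambda(p,q)=\lambda(q,r)=\lambda(p,r)$. Connectedness through non-collinear triples, available since $n\ge2$, makes $\lambda$ a global constant. Hence $\varphi(p)=\tfrac{\lambda}{2}\norm{p-c}^2+\mathrm{const}$. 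Integrability of $g$ forces $\lambda>0$, so $g$ is an isotropic Gaussian up to translation.

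\textbf{Expected obstacle.} I expect the main difficulties to lie in Steps 3 and 4. In Step 3, one must check carefully that the two-point test motions qualify as smooth continuous contractions in the sense of Theorem \ref{thm: heatflowpositivepressure}. This should be fine because the contraction need only be defined on $K$. In Step 4, the point is to pin down rigorously the constancy of $\lambda$, which is exactly where $n\ge2$ is used: for $n=1$ the parallelism condition only says $\varphi'$ is nondecreasing, i.e.\ $g$ is log-concave.
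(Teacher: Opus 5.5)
Your proof is correct. Its core matches the paper's: both test (i) on two-point measures moving affinely, evaluate at $t=0$, and extract $\nabla\varphi(p)-\nabla\varphi(q)=\lambda(p,q)(p-q)$ with $\lambda(p,q)\geq0$. The paper keeps one atom fixed, which is your case $u_b=0$. Its displayed divergence is exactly your symmetrised identity specialised to two atoms.

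The differences are at the two ends.
\begin{itemize}
\item For (ii)$\Rightarrow$(i), the paper simply cites the divergence computation of \cite{AishwaryaLi25KP}. Your symmetrised formula, combined with $(v_t(y)-v_t(z))\cdot(y-z)\le0$, makes this direction self-contained.
\item For the last step, the paper passes to the Hessian and gets $\nabla^2\varphi=c(x)I$. It then uses the vanishing of mixed partials: $c$ depends only on $x_i$ for every $i$, hence is constant when $n\ge2$. You instead work with the finite-difference relation itself via non-collinear triples. This route is more elementary and needs only $g\in C^1$.
\end{itemize}

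Your preliminary step $D^2\varphi(p)=\lambda(p)I$ is actually superfluous. Once $\lambda(p,q)\equiv\lambda$, the relation says directly that $\nabla\varphi$ is affine.

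The ``connectedness'' should be made explicit, since that is exactly where $n\ge2$ enters. Given pairs $\{p,q\}$ and $\{p',q'\}$ with $p\neq p'$, choose $r$ off the three lines through $p,q$, through $p',q'$, and through $p,p'$. This is possible since $n\ge2$, and the three triples then give $\lambda(p,q)=\lambda(p,r)=\lambda(p',r)=\lambda(p',q')$. The case $p=p'$ is handled similarly with two lines.

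One wording slip in Step 3: the vectors that are nonpositive against $\{u:u\cdot(a-b)\le0\}$ are the nonnegative multiples of its \emph{outer} normal $a-b$, not its inner normal. The formula you actually use, $d=\lambda(a-b)$ with $\lambda\ge0$, is the correct one.
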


In fact, the implication \ref{item: ourdispersivecondition} $\Rightarrow$ \ref{item: itisgaussian} already follows by testing \ref{item: ourdispersivecondition} on probability measures supported on two points. We expect that a stronger rigidity statement holds at the level of internal energies themselves; namely, that in dimensions $n\geq2$, the validity of the nonnegative-pressure comparison in Theorem \ref{thm: heatflowpositivepressure} for every continuous contraction characterises the isotropic Gaussian among convolution kernels. Theorem \ref{thm: GaussianCharacterisationIntro} establishes this under the stronger pointwise condition on the canonical velocity field. 

There is a useful parallel with the theory of dispersive order in statistics (see \cite[Section~3.B]{ShakedShanthikumar07}). In one dimension, the condition $\div\widetilde v_t\leq0$ simply means that $\widetilde v_t$ is nonincreasing, and hence that its flow is contractive. Moreover, the same argument works for every log-concave convolution kernel. This is consistent with the theorem of Lewis and Thompson \cite[Theorem~8]{LewisThompson81}, which characterises the probability densities whose convolution preserves dispersive order as precisely those with log-concave densities. Theorem \ref{thm: GaussianCharacterisationIntro} shows that the corresponding infinitesimal volume-contraction property is considerably more rigid in dimensions $n\geq2$, as it singles out the isotropic Gaussian.
\subsection{Organisation of the paper} 
We define the pressure classes without appealing to differentiability, and give some examples, in Section \ref{sec: energypressureprelims}. In Section \ref{sec: emonotonicity}, we prove Theorems \ref{thm: heatflowpositivepressure} and \ref{thm: GaussianCharacterisationIntro}. The interaction between Gaussian marginalisation and pressure classes, culminating in Theorems \ref{thm: BCheatflowintro} and \ref{thm: pressureclassmain}, is developed in Section \ref{sec: BezdekConnelly}. Finally, in Section \ref{sec: KP}, we prove the full version of Theorem \ref{thm: heattoKPintro} concluding the paper.

\begin{notation}
We will identify a probability measure with its density whenever there is no scope for confusion. For example, $\gamma_s$ will denote both the isotropic Gaussian measure and its density. Probability densities will typically be denoted by $\rho,\sigma$, etc., while probability measures not necessarily absolutely continuous with respect to Lebesgue measure will be denoted by $\mu,\nu$, etc. We write $\mathcal P(E)$ for the space of probability measures on a set $E$, and $\mathcal{P}_{ac}(E)$ for those which are absolutely continuous with respect to Lebesgue measure.
\end{notation}

\textbf{Acknowledgements.} The authors are grateful to Shiri Artstein-Avidan for suggesting the possibility that Gaussian convolution alone might be sufficient to approach the Kneser--Poulsen conjecture, a suggestion that inspired the present work. The first-named author would like to thank Jun Kitagawa and Sayantan Chakraborty for many enlightening discussions on a related project, which helped lead the authors to the Gaussian characterisation.

\textbf{Statement on AI use.} The main conceptual framework and principal results of this paper were developed by the authors. ChatGPT 5.6 was subsequently used to assist in developing proofs and technical details based on ideas provided by the authors. In addition, interactions with ChatGPT 5.6 led to the variable-radii extension described in Remark \ref{rem: KPforvaryingradiiIntro} and were instrumental in developing the proof of item \ref{item: positivepressuretransport} of Theorem \ref{thm: heatflowpositivepressure}. The manuscript itself was written and organised by the authors; ChatGPT was used as an auxiliary mathematical and editorial tool. All AI-assisted material included in the paper was reviewed and verified by the authors, who take full responsibility for its content.

\section{Internal energies and pressure classes} \label{sec: energypressureprelims}
We start by formally recording the definition of internal energy.
\begin{definition}[Internal energy and energy density]
    Let $\edensity:[0,\infty)\to\R$ be a measurable function satisfying $\edensity(0)=0$. For every $\rho\in\mathcal P_{ac}(\R^n)$ for which the integral below is defined, let
\begin{equation}
    \energy(\rho)
    =
    \int_{\R^n}\edensity(\rho(x))\,\d x.
\end{equation}
The value of $\energy(\rho)$ may be finite, $+\infty$, or $-\infty$. We call $\edensity$ the \textbf{energy density} of $\energy$.
\end{definition}
If $\edensity$ were differentiable as assumed in the introduction, and we defined its pressure $P$ by \eqref{eq: defPressureintro} then, for every $\rho>0$,
\begin{equation} \label{eq: positivepressureinU}
    \frac{\d}{\d\rho}
    \left(\frac{\edensity(\rho)}{\rho}\right)
    =
    \frac{\rho\edensity'(\rho)-\edensity(\rho)}{\rho^2}
    =
    \frac{P(\rho)}{\rho^2}.
\end{equation}
Consequently, $P\geq0$ on $[0,\infty)$ if and only if $\rho\mapsto\edensity(\rho)/\rho$ is nondecreasing on $(0,\infty)$. This motivates the following definition, which does not require differentiability.

\begin{definition}[Nonnegative pressure]
    An energy density $\edensity$, and the corresponding internal energy $\energy$, are said to have a \textbf{nonnegative pressure} law if
    \begin{equation}
        \rho\longmapsto\frac{\edensity(\rho)}{\rho}
    \end{equation}
    is nondecreasing on $(0,\infty)$.
\end{definition}
Continuing with the ideal case discussed in the introduction, note further that $P$ would be nondecreasing if and only if
\begin{equation}
    P'(\rho)
    =
    \rho\edensity''(\rho)
    \geq0,
    \qquad \rho>0,
\end{equation}
that is, if and only if $\edensity$ were convex. This allows us to define the nondecreasing-pressure condition without appealing to the existence of a pressure function.

\begin{definition}[Nondecreasing pressure]
    An energy density $\edensity$, and the corresponding internal energy $\energy$, are said to have a \textbf{nondecreasing pressure} law if $\edensity$ is convex on $[0,\infty)$.
\end{definition}
Once $\edensity$ is convex, its one-sided derivatives exist at every $\rho>0$. We use the right derivative to define its pressure law.

\begin{definition}[Pressure]
    Suppose that the energy density $\edensity$ is convex. Its \textbf{pressure law} is the function $P:[0,\infty)\to\R$ defined by $P(0)=0$ and
    \begin{equation}\label{eq: defPressure}
        P(\rho)
        =
        \rho\edensity'_+(\rho)-\edensity(\rho),
        \qquad \rho>0,
    \end{equation}
    where $\edensity'_+$ denotes the right derivative of $\edensity$.
\end{definition}

With a pressure function in hand, we are ready to define the higher pressure classes. For $c\geq1$ and a function $F:[0,\infty)\to\R$, define its multiplicative forward difference by
\begin{equation}
    \Delta_cF(\rho)
    =
    F(c\rho)-F(\rho).
\end{equation}

\begin{definition}[Pressure classes] \label{def:pressureclasses}
    The class $\pclass_0$ consists of the energy densities with nonnegative pressure. For an integer $k\geq1$, a convex energy density $\edensity$ belongs to the \textbf{$k$-th pressure class} $\pclass_k$ if its pressure law satisfies
    \begin{equation}
        \Delta_{c_1}\cdots\Delta_{c_k}P(\rho)
        \geq0
    \end{equation}
    for every $\rho\geq0$ and every $c_1,\ldots,c_k\geq1$. An internal energy belongs to $\pclass_k$ if its energy density does.
\end{definition}
\begin{remark} \label{rem: nesting}
The highest-order condition in the definition implies all the lower-order finite-difference conditions. Indeed, suppose $k\geq2$, and the energy density $\edensity$ with associated pressure $P$ is in $\pclass_{k}$. Fix $c_1,\ldots,c_{k-1}\geq1$. Then,
\begin{equation}
    F(\rho)
    =
    \Delta_{c_1}\cdots\Delta_{c_{k-1}}P(\rho)
\end{equation}
satisfies $\Delta_cF(\rho)\geq0$ for every $c\geq1$. This means that $F(\rho)$ is nondecreasing. Now, because $P$ is nonnegative and nondecreasing, the limit $\lim_{\rho \to 0} P(\rho )$ exists. Given that $F$ is formed of iterations of differences of $P$, this implies $\lim_{\rho \to 0}F(\rho) = 0$, thereby proving $F \geq 0$ exactly as required by the $\pclass_{k-1}$ condition.  
\end{remark}

The definition above agrees with the one in the introduction once sufficient derivatives of $\edensity$ are assumed.

\begin{proposition}\label{prop: regularirregularsame}
    Let $k\geq1$, and suppose that the energy density $\edensity$ is convex on $[0, \infty)$ and $C^{k+1}(0,\infty)$. Define its pressure $P$ as in \eqref{eq: defPressureintro}. Then $\edensity\in\pclass_k$ if and only if
    \begin{equation}
        \left(\rho\frac{\d}{\d\rho}\right)^kP(\rho)
        \geq0
    \end{equation}
    for every $\rho>0$.
\end{proposition}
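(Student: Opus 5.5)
The plan is to pass to logarithmic coordinates, which turn multiplicative differences into ordinary additive differences and the Euler operator $D=\rho\frac{\d}{\d\rho}$ into an ordinary derivative. Set $\rho=e^t$ and $f(t)=P(e^t)$ for $t\in\R$. Since $\edensity\in C^{k+1}(0,\infty)$, the pressure $P(\rho)=\rho\edensity'(\rho)-\edensity(\rho)$ lies in $C^{k}(0,\infty)$, so $f\in C^k(\R)$. Writing $c_i=e^{h_i}$ with $h_i\ge 0$, we have $\Delta_{c_i}P(e^t)=f(t+h_i)-f(t)$, and $D^jP(e^t)=f^{(j)}(t)$. For $\rho>0$, the condition defining $\pclass_k$ therefore says exactly that every iterated forward difference $\delta_{h_1}\cdots\delta_{h_k}f$ with $h_i\ge0$ is nonnegative. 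The claim is that this holds if and only if $f^{(k)}\ge0$.

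The point $\rho=0$ needs a separate check. There $c\rho=0$ for every $c$, so $\Delta_{c_1}\cdots\Delta_{c_k}P(0)=0$, and the condition holds trivially. Convexity of $\edensity$ is assumed in both directions, so the right derivative $\edensity'_+$ coincides with $\edensity'$ on $(0,\infty)$. Hence the pressure in \eqref{eq: defPressure} agrees with the one in \eqref{eq: defPressureintro}.

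For the direction $D^kP\ge0\Rightarrow\edensity\in\pclass_k$, I will use the integral representation of iterated differences of a $C^k$ function:
\begin{equation*}
\delta_{h_1}\cdots\delta_{h_k}f(t)=\int_0^{h_1}\!\!\cdots\int_0^{h_k} f^{(k)}(t+s_1+\cdots+s_k)\,\d s_k\cdots\d s_1 .
\end{equation*}
This follows by induction, using $\delta_h g(t)=\int_0^h g'(t+s)\,\d s$ and the fact that differences commute with derivatives. If $f^{(k)}\ge0$ and all $h_i\ge0$, the integrand is nonnegative, so the left side is nonnegative.

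For the converse, fix $t$ and take $h_1=\cdots=h_k=h>0$. By the same representation,
\begin{equation*}
h^{-k}\delta_h^k f(t)=h^{-k}\int_{[0,h]^k}f^{(k)}(t+s_1+\cdots+s_k)\,\d s\ \longrightarrow\ f^{(k)}(t)\quad\text{as }h\to0^+,
\end{equation*}
by continuity of $f^{(k)}$. Each term on the left is nonnegative, so $f^{(k)}(t)\ge0$, which is $D^kP(e^t)\ge0$.

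The only delicate step is the regularity bookkeeping. The representation requires $f\in C^k$, which is exactly what $\edensity\in C^{k+1}$ provides, and this is where that hypothesis is used. Nothing else in the argument poses an obstacle; in particular, Remark \ref{rem: nesting} is not needed.
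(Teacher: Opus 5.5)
Your proof is correct, but it is organised differently from the paper's. The paper proves only the first-order fact that, for differentiable $F$, $\Delta_cF\geq0$ for all $c\geq1$ if and only if $DF\geq0$, where $D=\rho\frac{\d}{\d\rho}$. It combines this with the commutation $D\Delta_c=\Delta_cD$ and then strips off one difference at a time. For fixed $c_2,\ldots,c_k$, the condition in $c_1$ becomes $\Delta_{c_2}\cdots\Delta_{c_k}DP\geq0$, and so on down to $D^kP\geq0$.

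You instead pass to $t=\log\rho$ and use the explicit $k$-fold integral formula. In the original variable it reads $\Delta_{c_1}\cdots\Delta_{c_k}P(\rho)=\int_1^{c_1}\cdots\int_1^{c_k}(D^kP)(\rho r_1\cdots r_k)\frac{\d r_1}{r_1}\cdots\frac{\d r_k}{r_k}$. Its one-difference case is exactly the identity \eqref{eq: deltagoestointegrallim}, with $D\Pi P=P$, which the paper uses later for the pressure-class ascent.

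The paper's induction is shorter. Your formula handles both directions at once and gives a slightly stronger converse. You only test $c_1=\cdots=c_k=c$ and let $c\to1^+$, so you show that for $C^{k+1}$ densities the diagonal condition $\Delta_c^kP\geq0$ already characterises $\pclass_k$. The paper's induction, by contrast, uses independent ratios at every stage.

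One minor wording point: it is the differentiability of $\edensity$ on $(0,\infty)$, not its convexity, that gives $\edensity'_+=\edensity'$ there. Convexity is what is needed for the pressure in \eqref{eq: defPressure}, and membership in $\pclass_k$, to be defined at all.
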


\begin{proof}
For any differentiable function $F$,
\begin{equation}
    \Delta_cF(\rho)\geq0
    \quad\text{for every }c\geq1
\end{equation}
if and only if $F$ is nondecreasing, or equivalently,
\begin{equation}
    \rho\frac{\d}{\d\rho}F(\rho)\geq0.
\end{equation}
Moreover,
\begin{equation}
    \rho\frac{\d}{\d\rho}\Delta_cF(\rho)
    =
    c\rho F'(c\rho)-\rho F'(\rho)
    =
    \Delta_c\left(\rho\frac{\d}{\d\rho}F(\rho)\right).
\end{equation}

Iterating this observation gives
\begin{equation}
    \Delta_{c_1}\cdots\Delta_{c_k}P(\rho)\geq0
    \quad\text{for every }c_1,\ldots,c_k\geq1,
\end{equation}
if and only if,
\begin{equation}
    \left(\rho\frac{\d}{\d\rho}\right)^kP(\rho)\geq0.
\end{equation}
\end{proof}

We end this section by discussing some examples. 
\begin{examples} \label{examples: energies}
\begin{enumerate}[label=(\roman*),ref=(\roman*)]
\item[]
\item \textbf{Boltzmann $H$-functional.}
For
\begin{equation}
    \edensity(\rho)=\rho\log\rho,
\end{equation}
with $\edensity(0)=0$, the pressure is $P(\rho)=\rho$. Hence
    $\left(\rho\frac{\d}{\d\rho}\right)^kP(\rho)=\rho$
for every $k\geq1$. Thus, the Boltzmann energy belongs to every pressure class $\pclass_k$. Consequently, Theorem \ref{thm: pressureclassmain} applies in every dimension in this case.

\item \textbf{R\'enyi entropies.}
For $\alpha>0$, $\alpha\neq1$, consider the energy density
\begin{equation}
    \edensity_\alpha(\rho)
    =
    \frac{\rho^\alpha-\rho}{\alpha-1}.
\end{equation}
Since $\rho$ integrates to one, the corresponding internal energy is the negative Tsallis entropy of order $\alpha$, and is a monotone function of the negative R\'enyi entropy of the same order. Its pressure is $P_\alpha(\rho)=\rho^\alpha$,
and therefore $\left(\rho\frac{\d}{\d\rho}\right)^kP_\alpha(\rho)
    =
    \alpha^k\rho^\alpha>0$. Thus, these energies also belong to every $\pclass_k$. Consequently, Theorem \ref{thm: pressureclassmain} applies in every dimension in this case as well.

\item \textbf{Hinge functions.}
For $a>0$, 
\begin{equation}
    \edensity_a(\rho)
    =
    \frac{(\rho-a)_+}{a}.
\end{equation}
 is convex and hence in $\pclass_1$. The associated pressure is $\ind_{\{\rho\geq a\}}$. In fact, comparisons for all energies in $\pclass_1$ can be reduced to hinge functions; see \cite[Theorem 2.1]{MelbourneRoberto23}. Consequently, Theorem \ref{thm: pressureclassmain} applies in dimensions $n\leq2$ in this case.

\item \textbf{Differences of hinges.}
Let $0<a<b$ and set
\begin{equation}
    \edensity_{a,b}
    =
    \edensity_a-\edensity_b.
\end{equation}
Then $\edensity_{a,b}(\rho)/\rho$ is nondecreasing, and hence $\edensity_{a,b}$ is in $\pclass_0$. However, $\edensity_{a,b}$ is not convex, so it is not in $\pclass_{1}$. Thus, Theorem \ref{thm: pressureclassmain} does not apply to this example in any dimension, and for good reason, as explained in Remark \ref{rem: whynotnonnegativePressureinanyT}.
\item \textbf{Bose--Einstein H-functional.}
Consider the energy density
\begin{equation}
    \edensity(\rho)
    =
    \rho\log\rho-(1+\rho)\log(1+\rho),
\end{equation}
with $\edensity(0)=0$. This is the negative Bose--Einstein entropy density. Its pressure is $P(\rho)=\log(1+\rho)$. Moreover,
$\left(\rho\frac{\d}{\d\rho}\right)P(\rho)=\frac{\rho}{1+\rho}$ and
$\left(\rho\frac{\d}{\d\rho}\right)^2P(\rho)=\frac{\rho}{(1+\rho)^2}$,
whereas
$\left(\rho\frac{\d}{\d\rho}\right)^3P(\rho)=\frac{\rho(1-\rho)}{(1+\rho)^3}$.
Thus the Bose--Einstein energy belongs to $\pclass_2$, but not to $\pclass_3$. Consequently, Theorem \ref{thm: pressureclassmain} applies in dimensions $n\leq4$ in this case.
\end{enumerate}
\end{examples}
\section{Energy monotonicity for continuous contractions under Gaussian convolution} \label{sec: emonotonicity}

We first reduce the proof to polynomial energy densities. For this reduction, it would be convenient to first interpret the order induced by internal energies in $\pclass_{0}$ probabilistically. 
\begin{definition}[Stochastic order]
A random variable $X$ is said to be \textbf{stochastically dominated} by a random variable $Y$ if $\E Q (X) \leq \E Q(Y)$ for every nondecreasing function $Q: \R \to \R$. In this case, we write $X \preceq_{st} Y$. 
\end{definition}
The following lemma clarifies how $\pclass_{0}$ internal energy comparisons relate to the stochastic ordering.
\begin{lemma} \label{lem: pc0stochastictransport}
Let $\rho$ and $\sigma$ be probability densities on $\R^n$. Let $X$ and $Y$ denote $\R^n$-valued random vectors having densities $\rho$ and $\sigma$, respectively. Then the following statements are equivalent. 
\begin{enumerate}[label=(\roman*),ref=(\roman*)]
    \item $\energy{\rho}\leq\energy{\sigma}$ for every internal energy in $\pclass_{0}$ for which the two sides are defined.
    \item We have,
    \begin{equation}
        \rho(X) \preceq_{st} \sigma(Y).
    \end{equation}
    \item There exists a joint distribution $( X, Y)$ such that
    \begin{equation}
        \rho(X) \leq \sigma(Y)
    \end{equation}
    almost surely.
\end{enumerate}
\end{lemma}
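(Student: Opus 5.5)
The plan is to rewrite every $\pclass_0$ energy as an expectation of a nondecreasing function of the density value, and then use the standard characterisation of stochastic order by couplings.

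Fix $\edensity\in\pclass_0$ and write $\edensity(t)=t\,Q(t)$ for $t>0$, where $Q(t)=\edensity(t)/t$ is nondecreasing on $(0,\infty)$ by definition of nonnegative pressure. Since $\{\rho=0\}$ contributes nothing because $\edensity(0)=0$, we get
\begin{equation}
\energy{\rho}=\int_{\{\rho>0\}}\rho(x)\,Q(\rho(x))\,\d x=\E\, Q(\rho(X)),
\end{equation}
because $X$ has density $\rho$ and $\rho(X)>0$ almost surely. Likewise $\energy{\sigma}=\E\,Q(\sigma(Y))$. Conversely, any nondecreasing $Q:(0,\infty)\to\R$ gives an energy density $\edensity(t)=tQ(t)$ with $\edensity(0)=0$, and this density lies in $\pclass_0$. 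Thus the family of $\pclass_0$ energies corresponds exactly to expectations of nondecreasing test functions of $\rho(X)$ and $\sigma(Y)$. Both random variables take values in $(0,\infty)$ almost surely, so a nondecreasing $Q$ on $\R$ restricts to one on $(0,\infty)$, and every nondecreasing $Q$ on $(0,\infty)$ extends monotonically to $\R$, for instance by $Q(t)=\lim_{u\downarrow 0}Q(u)$ for $t\le 0$, or by truncating when that limit is $-\infty$.

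\textbf{Equivalence of (i) and (ii).} This follows directly from the dictionary above. The one point needing care is the qualifier ``for which the two sides are defined''. The definition of $\preceq_{st}$ quantifies over all nondecreasing $Q$, so I read it, as usual, as applying to those $Q$ for which the expectations make sense. For a general monotone $Q$ I would truncate, replacing $Q$ by $\max(\min(Q,M),-M)$, which keeps both monotonicity and membership in $\pclass_0$. It then suffices to pass to the limit with monotone convergence on the positive and negative parts separately. This bookkeeping, that is, making the integrability conventions in (i) and (ii) match, is the only mildly delicate point.

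\textbf{Equivalence of (ii) and (iii).} This is Strassen's theorem for real random variables. For (iii)$\Rightarrow$(ii), if $\rho(X)\le\sigma(Y)$ almost surely then $Q(\rho(X))\le Q(\sigma(Y))$ almost surely, and we take expectations. For (ii)$\Rightarrow$(iii), test (ii) with the indicators $Q=\ind_{(t,\infty)}$; this gives $F_{\sigma(Y)}(t)\le F_{\rho(X)}(t)$ for all $t$. Let $U$ be uniform on $(0,1)$ and use quantile functions to obtain $A=F^{-1}_{\rho(X)}(U)\le B=F^{-1}_{\sigma(Y)}(U)$. To lift this to $\R^n$, disintegrate the law of $X$ given $\rho(X)=a$ and the law of $Y$ given $\sigma(Y)=b$. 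Then sample $X$ from the conditional law given $A$, and $Y$ independently from the conditional law given $B$. The resulting pair has marginals $\rho$ and $\sigma$ and satisfies $\rho(X)=A\le B=\sigma(Y)$ almost surely.

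This gluing step uses only the existence of regular conditional distributions on $\R^n$ and poses no real obstacle.
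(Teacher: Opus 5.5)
Your proposal is correct and follows essentially the same route as the paper. Both arguments use the dictionary $U(t)=tQ(t)$, which turns $\pclass_0$ energies into $\mathbb{E}\,Q(\rho(X))$, and both get (i)$\Leftrightarrow$(ii) from the standard characterisation of $\preceq_{st}$ and (ii)$\Rightarrow$(iii) by lifting a one-dimensional monotone coupling to $\R^n$, gluing conditionally independent samples from the fibres $\{\rho=a\}$ and $\{\sigma=b\}$. The differences are minor: you build the real-valued coupling explicitly with quantile functions, where the paper cites Theorem~1.A.1 of Shaked--Shanthikumar, and you write out the truncation and integrability bookkeeping that the paper leaves implicit.
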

\begin{proof}
For $a>0$, write $\edensity(a)=aQ(a)$. Then $\edensity\in\pclass_0$ if and only if $Q$ is nondecreasing. We also have,
\begin{equation}
    \energy{\rho}
    =
    \E Q(\rho(X)),
    \qquad
    \energy{\sigma}
    =
    \E Q(\sigma(Y)).
\end{equation}
Thus, by the standard characterisation of the stochastic order
\cite[Section~1.A.1]{ShakedShanthikumar07}, \textup{(i)} and \textup{(ii)}
are equivalent. Condition \textup{(iii)} also immediately implies \textup{(i)} similarly.

By \cite[Theorem~1.A.1]{ShakedShanthikumar07}, \textup{(ii)} is equivalent
to the existence of a coupling of the random variables $\rho(X)$ and
$\sigma(Y)$ for which
\begin{equation}
    \rho(X)\leq\sigma(Y)
\end{equation}
almost surely. The function $\rho$ induces a disintegration of $X$ into conditionals $X_{a} \sim X \vert \rho(X) = a$ and likewise $\sigma$ induces a disintegration of $Y$ into conditionals $Y_{b} \sim Y \vert \sigma (Y) = b$. Conditioned on $(\rho(X), \sigma(Y)) = (a,b)$, couple $X_{a}$ and $Y_{b}$ independently. Averaging these conditionals over $(\rho(X) , \sigma(Y))$ results in a coupling of $X$ and $Y$ satisfying $\rho(X)\leq\sigma(Y)$ almost surely. This proves that \textup{(ii)} implies \textup{(iii)}.
\end{proof}

\begin{lemma} \label{lem: polyareenough}
    Let $\rho,\sigma \in\mathcal P(\R^n)$ be bounded densities satisfying $0\leq\rho,\sigma \leq C$. To prove $\energy{\rho}\leq\energy{\sigma}$ for every internal energy $\energy$ in $\pclass_{0}$, it suffices to prove the inequality for every $\energy$ with polynomial energy density $\edensity$ whose associated pressure satisfies $P\geq0$ on $[0,C]$.
\end{lemma}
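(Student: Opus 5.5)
The plan is to pass through the stochastic-order reformulation of Lemma \ref{lem: pc0stochastictransport} and approximate nondecreasing test functions by polynomials on the compact range $[0,C]$.

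Write $\edensity(a)=aQ(a)$. Then $\edensity\in\pclass_0$ exactly when $Q$ is nondecreasing. Let $X\sim\rho$, $Y\sim\sigma$, and set $A=\rho(X)$, $B=\sigma(Y)$; both take values in $[0,C]$. By Lemma \ref{lem: pc0stochastictransport}, the goal $\energy{\rho}\leq\energy{\sigma}$ for all of $\pclass_0$ is equivalent to $A\preceq_{st}B$. Since $A,B\in(0,C]$ almost surely, this is in turn equivalent to
\begin{equation*}
\mathbb P(A>t)\leq\mathbb P(B>t)\qquad\text{for every } t\in(0,C).
\end{equation*}
So it suffices to test against the indicator $Q_t=\ind_{(t,\infty)}$, that is, against the energy density $\edensity_t(a)=a\ind_{\{a>t\}}$.

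The main step is to realise each such indicator test as a limit of polynomial energies whose pressure is nonnegative on $[0,C]$. For polynomial $\edensity(a)=aq(a)$ the pressure is $P(a)=a^2q'(a)$, so $P\geq0$ on $[0,C]$ precisely when $q$ is nondecreasing on $[0,C]$. Note also that $\edensity(0)=0$ automatically, so $\edensity$ is a legitimate energy density, and the energies are finite because the densities are bounded.

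First approximate $\ind_{(t,\infty)}$ from below, increasingly, by continuous nondecreasing ramps $h_m$ on $[0,C]$. Then approximate each ramp by nondecreasing polynomials. The Bernstein polynomials of a nondecreasing continuous function on an interval are nondecreasing and converge uniformly, so they supply $q_{m,j}\to h_m$ uniformly on $[0,C]$ with each $q_{m,j}$ nondecreasing there. The hypothesis, applied to the polynomial energy $\edensity(a)=aq_{m,j}(a)$, gives
\begin{equation*}
\E q_{m,j}(A)\leq\E q_{m,j}(B).
\end{equation*}
Uniform convergence on $[0,C]$ lets us pass $j\to\infty$ to get $\E h_m(A)\leq\E h_m(B)$. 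Monotone convergence in $m$ then gives $\mathbb P(A>t)\leq\mathbb P(B>t)$, hence $A\preceq_{st}B$, and Lemma \ref{lem: pc0stochastictransport} concludes.

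The main obstacle is bookkeeping for the clause ``for which the two sides are defined'' in Lemma \ref{lem: pc0stochastictransport}: one must check that every $\pclass_0$ energy reduces to the nondecreasing-$Q$ expectation form. This holds because $\energy{\rho}=\E Q(\rho(X))$ and the set $\{\rho=0\}$ contributes nothing since $\edensity(0)=0$. The other point needing care is confirming that Bernstein approximation preserves monotonicity. This follows from the standard formula for the derivative of a Bernstein polynomial as a nonnegative combination of forward differences of the sampled values, which are nonnegative because $h_m$ is nondecreasing.
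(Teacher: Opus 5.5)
Your proposal is correct and follows essentially the same route as the paper: reduce via Lemma \ref{lem: pc0stochastictransport} to $\rho(X)\preceq_{st}\sigma(Y)$, approximate continuous nondecreasing test functions on $[0,C]$ uniformly by Bernstein polynomials $q$ (which stay nondecreasing), and observe that $U(a)=aq(a)$ is a polynomial energy density with pressure $a^2q'(a)\geq0$ on $[0,C]$. The only difference is that you prove, via continuous ramps and monotone convergence, that continuous nondecreasing test functions suffice, whereas the paper cites this from the standard characterisation of the stochastic order.
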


\begin{proof}
Let $X$ and $Y$ have densities $\rho$ and $\sigma$, respectively. By the preceding lemma, it suffices to prove
\begin{equation}
    \rho(X)\preceq_{st}\sigma(Y).
\end{equation}
It is enough to test this order against continuous nondecreasing functions $Q$ on $[0,C]$, since these already characterise the stochastic order (see, for example, the discussion in \cite[Section 1.A.1]{ShakedShanthikumar07}). Let $\{Q_j \}$ denote the approximation of such a function $Q$ by Bernstein polynomials. Then each $Q_j$ is nondecreasing on $[0,C]$ and $Q_j\to Q$, in fact, uniformly. Moreover,
\begin{equation}
    \edensity_j(a)=aQ_j(a)
\end{equation}
is a polynomial energy density whose pressure is
\begin{equation}
    P_j(a)=a^2Q_j'(a)\geq0
\end{equation}
on $[0,C]$. Hence, by assumption,
\begin{equation}
    \E Q_j(\rho(X))
    \leq
    \E Q_j(\sigma(Y)).
\end{equation}
Letting $j\to\infty$ gives the same inequality with $Q$ in place of $Q_j$, and therefore
$\rho(X)\preceq_{st}\sigma(Y)$. The preceding lemma completes the proof.
\end{proof}
We continue to write our proofs in probabilistic language for the next theorem. 
\begin{proof}[Proof of Theorem \ref{thm: heatflowpositivepressure}]
Let $\{T_t\}_{t\in[0,1]}$ realise the continuous contraction, with $T_0=I$ and $T_1=T$, and let $X$ be a random vector with distribution $\mu$. Set
\begin{equation}
    \mu_t=(T_t)_\#\mu.
\end{equation}
Recall that
\begin{equation}
    \gamma_s(x)
    =
    C e^{-\frac{1}{2s}\norm{x}^2},
    \qquad
    C=\gamma_s(0)=(2\pi s)^{-n/2}.
\end{equation}
Then
\begin{equation}
    (\mu_t*\gamma_s)(x)
    =
    \E\gamma_s(x-T_t(X)),
\end{equation}
and hence $0<\mu_t*\gamma_s\leq C$. Thus Lemma \ref{lem: polyareenough} is applicable.

We begin with the monomial energy density $\edensity(\rho)=\rho^m$, where $m\geq2$ is an integer. Let $X_1,\ldots,X_m$ be independent copies of $X$. Then
\begin{equation}
\begin{split}
    (\mu_t*\gamma_s)^{m}(x)
    &=
    \E\prod_{i=1}^m\gamma_s(x-T_t(X_i))\\
    &=
    C^m
    \E
    e^{-\frac{1}{2s}\sum_{i=1}^m\norm{x-T_t(X_i)}^2}.
\end{split}
\end{equation}
For $y_1,\ldots,y_m\in\R^n$, with $\bar y=\frac{1}{m}\sum_i y_i$, we have
\begin{equation}
    \sum_{i=1}^m\norm{x-y_i}^2
    =
    m\norm{x-\bar y}^2
    +
    \frac1m\sum_{i<j}\norm{y_i-y_j}^2.
\end{equation}
Since
\begin{equation}
    \int_{\R^n}
    e^{-\frac{m}{2s}\norm{x-\bar y}^2}\d x
    =
    \frac{1}{Cm^{n/2}},
\end{equation}
we obtain
\begin{equation}\label{eq: mRenyirep}
    \int_{\R^n}(\mu_t*\gamma_s)^{m}(x)\d x
    =
    \frac{C^{m-1}}{m^{n/2}}
    \E
    e^{-\frac{1}{2ms}\sum_{i<j}
    \norm{T_t(X_i)-T_t(X_j)}^2}.
\end{equation}

For each $i<j$, let $\nu_{ij}$ be the random Lebesgue--Stieltjes measure induced by the continuous nondecreasing function
\begin{equation}
    t\longmapsto
    -\norm{T_t(X_i)-T_t(X_j)}^2.
\end{equation}
The Lebesgue--Stieltjes chain rule gives
\begin{equation}
\begin{split}
&
e^{-\frac{1}{2ms}\sum_{i<j}\norm{T(X_i)-T(X_j)}^2}
-
e^{-\frac{1}{2ms}\sum_{i<j}\norm{X_i-X_j}^2}\\
&=
\frac{1}{2ms}
\sum_{i<j}
\int_{(0,1]}
e^{-\frac{1}{2ms}\sum_{k<l}\norm{T_t(X_k)-T_t(X_l)}^2}
\d\nu_{ij}(t).
\end{split}
\end{equation}
Substituting this into \eqref{eq: mRenyirep} and using the exchangeability of $X_1,\ldots,X_m$, together with
$\frac{1}{2m}\binom{m}{2}=\frac{1}{4}(m-1)$, we obtain
\begin{equation}
\begin{split}
&
\int_{\R^n}(\mu_1*\gamma_s)^{m}(x)\d x
-
\int_{\R^n}(\mu_0*\gamma_s)^{m}(x)\d x\\
&=
\frac{m-1}{4s}
\E\int_{(0,1]}
\frac{C^{m-1}}{m^{n/2}}
e^{-\frac{1}{2ms}\sum_{k<l}\norm{T_t(X_k)-T_t(X_l)}^2}
\d\nu_{12}(t).
\end{split}
\end{equation}
Reassembling the Gaussian product as in \eqref{eq: mRenyirep}, and then conditioning on $X_1,X_2$, gives
\begin{equation}
\begin{split}
&
\int_{\R^n}(\mu_1*\gamma_s)^{m}(x)\d x
-
\int_{\R^n}(\mu_0*\gamma_s)^{m}(x)\d x\\
&=
\frac{m-1}{4s}
\E\int_{(0,1]}
\int_{\R^n}
(\mu_t*\gamma_s)^{m-2}(x)
\prod_{i=1}^2
\gamma_s(x-T_t(X_i))
\d x\,\d\nu_{12}(t).
\end{split}
\end{equation}
The pressure corresponding to $\edensity(\rho)=\rho^m$ is $P(\rho)=(m-1)\rho^m$. Hence
\begin{equation}\label{eq: energydifferencepoly}
\begin{split}
&
\energy{\mu_1*\gamma_s}
-
\energy{\mu_0*\gamma_s}\\
&=
\frac{1}{4s}
\E\int_{(0,1]}
\int_{\R^n}
\frac{P((\mu_t*\gamma_s)(x))}
{(\mu_t*\gamma_s)(x)^2}
\prod_{i=1}^2
\gamma_s(x-T_t(X_i))
\d x\,\d\nu_{12}(t).
\end{split}
\end{equation}
The case $\edensity(\rho)=\rho$ is trivial, with both sides of \eqref{eq: energydifferencepoly} equal to zero. Since both sides of \eqref{eq: energydifferencepoly} depend linearly on the energy density and its pressure, the identity therefore holds for every polynomial energy density.

If $P\geq0$ on $[0,C]$, the right-hand side of
\eqref{eq: energydifferencepoly} is nonnegative. Lemma
\ref{lem: polyareenough} therefore gives
\begin{equation}
    \energy{\mu*\gamma_s}
    \leq
    \energy{T_\#\mu*\gamma_s}
\end{equation}
for every internal energy in $\pclass_0$. This proves
\ref{item: positivepressuretheorem}. By the preceding lemma, the same
comparison is equivalent to the existence of the coupling asserted in
\ref{item: positivepressuretransport}.

Under the additional assumptions in \ref{item: positivepressuresmoothtransport}, the construction in \cite[Propositions 2.5 and 2.6 and the proof of Theorem 1.6]{AishwaryaLi25KP} gives a volume-contracting flow transporting $\mu*\gamma_s$ to $(T_t)_\#\mu*\gamma_s$. Its final map has the properties required in \ref{item: positivepressuresmoothtransport}.
\end{proof}
\begin{proof}[Proof of Theorem \ref{thm: GaussianCharacterisationIntro}]
The implication \ref{item: itisgaussian} $\Rightarrow$ \ref{item: ourdispersivecondition} is precisely the divergence computation in \cite[proof of Theorem~1.6]{AishwaryaLi25KP}. Of course, translations of the Gaussian do not affect the conclusion.

Conversely, assume \ref{item: ourdispersivecondition}, and write $g=e^{-\phi}$.
Fix distinct $a,b\in\R^n$ and $w\in\R^n$ such that
\begin{equation}
    \langle w,b-a\rangle<0.
\end{equation}
Consider the two-point measure
\begin{equation}
    \mu=\frac12\left(\delta_0+\delta_{a-b}\right).
\end{equation}
For sufficiently small $\epsilon>0$, the trajectories
\begin{equation}
    T_t(0)=\epsilon t w,
    \qquad
    T_t(a-b)=a-b
\end{equation}
form a smooth continuous contraction. At $t=0$, its velocity field satisfies $v_0(0)=\epsilon w$ and $v_0(a-b)=0$. Evaluating the divergence of the canonical convolved velocity field at $x=a$ gives
\begin{equation}
    \div\widetilde v_0(a)
    =
    -\epsilon
    \frac{g(a)g(b)}{(g(a)+g(b))^2}
    \left\langle
        w,\grad\phi(a)-\grad\phi(b)
    \right\rangle
    \leq0.
\end{equation}
Thus
\begin{equation}
    \left\langle
        w,\grad\phi(a)-\grad\phi(b)
    \right\rangle
    \geq0
\end{equation}
whenever $\langle w,b-a\rangle<0$, and by continuity also when $\langle w,b-a\rangle=0$. Taking both $w$ and $-w$ orthogonal to $b-a$ shows that $\grad\phi(a)-\grad\phi(b)$ is parallel to $b-a$. Taking $w=a-b$ then gives
\begin{equation}
    \grad\phi(a)-\grad\phi(b)
    =
    C_{a,b}(a-b)
\end{equation}
for some $C_{a,b}\geq0$.

Letting $b\to a$ along arbitrary directions shows that every vector is an eigenvector of $\grad^2\phi(a)$. Hence
\begin{equation}
    \grad^2\phi(a)=c(a)I
\end{equation}
for some $c(a)\geq0$. Since $\partial_{ij}\phi=0$ for $i\neq j$, each $\partial_{i}\phi$ depends only on $x_i$. Since $\partial_{11}\phi=\cdots=\partial_{nn}\phi = c(x)$, the function $c(x)$ must be constant. Therefore
\begin{equation}
    \phi(x)
    =
    A+\langle u,x\rangle+\frac{c}{2}\norm{x}^2.
\end{equation}
Integrability of $g=e^{-\phi}$ forces $c>0$, and completing the square shows that $g$ is an isotropic Gaussian density up to translation.
\end{proof}

\section{Gaussian marginalisation, pressure class ascent, and arbitrary contractions} \label{sec: BezdekConnelly}

We will use the following associativity property of marginalisation.

\begin{lemma} \label{lem: marginalisationassociative}
    Let $\sigma_1$ and $\sigma_2$ be probability densities on $\R^{m_1}$ and $\R^{m_2}$, respectively. Whenever the marginalisations below are finite valued,
    \begin{equation}
        \Pi_{\sigma_1\otimes\sigma_2}\energy
        =
        \Pi_{\sigma_2}\Pi_{\sigma_1}\energy.
    \end{equation}
\end{lemma}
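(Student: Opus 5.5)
The plan is to unfold both sides to the level of energy densities and apply Tonelli/Fubini. Since $\Pi_\sigma\energy$ is by definition the internal energy with density $\Pi_\sigma\edensity$, it suffices to prove the identity of energy densities
\begin{equation*}
    \Pi_{\sigma_1\otimes\sigma_2}\edensity(\rho)
    =
    \Pi_{\sigma_2}\bigl(\Pi_{\sigma_1}\edensity\bigr)(\rho),
    \qquad \rho\geq0 .
\end{equation*}
For $\rho=0$ both sides vanish, since $\edensity(0)=0$. For $\rho>0$ the left-hand side is
\begin{equation*}
    \int_{\R^{m_1}\times\R^{m_2}}\edensity\bigl(\rho\,\sigma_1(y_1)\sigma_2(y_2)\bigr)\d y_1\d y_2 .
\end{equation*}
The right-hand side is
\begin{equation*}
    \int_{\R^{m_2}}\Pi_{\sigma_1}\edensity\bigl(\rho\sigma_2(y_2)\bigr)\d y_2
    =
    \int_{\R^{m_2}}\int_{\R^{m_1}}\edensity\bigl(\rho\sigma_2(y_2)\sigma_1(y_1)\bigr)\d y_1\d y_2 .
\end{equation*}
Here we used $\Pi_{\sigma_1}\edensity(0)=0$ on the set where $\sigma_2$ vanishes.

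The main step is to justify exchanging the iterated integral with the double integral. The issue is that $\edensity$ need not have a sign, so Fubini requires absolute integrability of $(y_1,y_2)\mapsto\edensity(\rho\sigma_1(y_1)\sigma_2(y_2))$. I would interpret the hypothesis ``whenever the marginalisations are finite valued'' as meaning that the defining integrals converge absolutely. This is the natural convention, since the integral of a non-signed function is only defined when it is absolutely convergent or one of its parts is finite.

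With that convention, finiteness of $\Pi_{\sigma_1\otimes\sigma_2}\edensity(\rho)$ gives integrability on the product space. Fubini then yields equality with the iterated integral. It also shows that the inner integral is finite for almost every $y_2$ and equals $\Pi_{\sigma_1}\edensity(\rho\sigma_2(y_2))$, which is finite everywhere by assumption.

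If one prefers to assume only finiteness of the two iterated marginalisations, I would instead split $\edensity=\edensity^+-\edensity^-$. Tonelli applies to each part separately. In the relevant cases ($\edensity$ convex, or $\edensity(\rho)/\rho$ monotone) at least one of the parts is controlled, because $|\edensity(a)|\leq C a$ near $0$ on the relevant range. I expect this measure-theoretic bookkeeping to be the only real obstacle; the identity itself is immediate from the product structure of $\sigma_1\otimes\sigma_2$.

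Finally, integrating the density identity over $x\in\R^n$ with $\rho=\rho(x)$ gives the stated equality of energies.
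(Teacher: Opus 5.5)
Your proposal is correct and is the same Fubini unfolding the paper intends (the paper records the lemma without a written proof); note that for Lebesgue integrals a finite value already forces absolute convergence, so your ``interpretation'' of the hypothesis adds no assumption. One slip in your optional alternative: $|U(a)|\leq Ca$ near $0$ fails for $U(a)=a\log a$, since only the one-sided bound $U^+(a)\leq Ca$ on bounded ranges holds (from convexity, or from monotonicity of $U(a)/a$), but your main argument does not use it.
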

\qed

\begin{proof}[Proof of Theorem \ref{thm: BCheatflowintro}]
We write
$\gamma_s^{(2)}(y)=C e^{-\frac{\norm{y}^2}{2s}}$, where
$C=\frac{1}{2\pi s}$. Polar coordinates and the change of variables
$w=C\rho e^{-r^2/(2s)}$ give
\begin{equation}\label{eq: polardecompmarginal}
    (\Pi_{\gamma_s^{(2)}}\edensity)(\rho)
    =
    \frac{1}{C}
    \int_0^{C\rho}\frac{\edensity(w)}{w}\d w.
\end{equation}
The rescaling
\begin{equation}
    \edensity(\rho)
    \longmapsto
    \frac{1}{C}\edensity(C\rho)
\end{equation}
preserves every pressure class, since it sends $P(\rho)$ to
$C^{-1}P(C\rho)$, and also preserves continuity at $0$. By
\eqref{eq: polardecompmarginal}, it therefore suffices to prove the theorem
with $C=1$, which we assume from now on. We will also simply write $\Pi$ for the corresponding $\Pi_{\gamma^{(2)}_{s}}$ in this case.

Fix $j\geq0$. Suppose first that $\edensity\in\pclass_j$, and 
\begin{equation} \label{eq: piufinite}
    \Pi \edensity (\rho) = \int_{0}^{\rho} \frac{\edensity(w)}{w} \d w \in \R.
\end{equation}
Thus $\Pi \edensity$ is continuous at $0$, and since $\frac{\edensity(w)}{w}$ is nondecreasing, $\Pi \edensity$ is also convex. Thus, $\Pi \edensity \in \pclass_{1}$. The assumption that $\Pi \edensity (\rho) \in \R$ also shows that $\edensity (\rho)$ must go to $0$ as $\rho \to 0$, showing the continuity of $\edensity$ at $0$.

Now suppose $j \geq 1$ and $U$ has pressure $P$. A short calculation shows that
\begin{equation}
    \Pi P (\rho) = \int_{0}^{\rho} \frac{P(w)}{w} \d w,
\end{equation}
is the pressure of $\Pi U$. Further, note that there are two equivalent representations of the action of $\Delta_{c}$ on $\Pi P$. First, 
\begin{equation} \label{eq: deltaPicommute}
\begin{split}
    &\Delta_{c} \Pi P (\rho) = \int_{0}^{c \rho} \frac{P(w)}{w} \d w - \int_{0}^{\rho} \frac{P(w)}{w} \d w \\
    &= \int_{0}^{\rho} \frac{P(cw) - P(w)}{w} \d w =          \Pi \Delta_{c} P (\rho),
    \end{split}
\end{equation}
and the second,
\begin{equation} \label{eq: deltagoestointegrallim}
    \Delta_{c} \Pi P (\rho) = \int_{0}^{c \rho} \frac{P(w)}{w} \d w - \int_{0}^{\rho} \frac{P(w)}{w} \d w =
    \int_\rho^{c\rho}\frac{P(w)}{w}\d w
    =
    \int_1^c\frac{P(\rho r)}{r}\d r.
\end{equation}
We use the first representation $j$ times and the second representation once, to show
\begin{equation}
\begin{split}
    \Delta_{c_1}\cdots\Delta_{c_{j+1}}\Pi P(\rho)&=
    \int_1^{c_{1}}
    \Delta_{c_2}\cdots\Delta_{c_{j+1}}
    P(\rho r)\frac{\d r}{r}
    \geq0,
\end{split}
\end{equation}
for $c_{1}, \ldots, c_{j+1} > 1$. Thus, $\Pi \edensity \in \pclass_{j+1}$.

Conversely, let $\edensity\in\pclass_{j+1}$ be continuous at $0$, with pressure $P$. Then, given that $\edensity$ is convex, $P$ is also continuous at $0$. 

Define
\begin{equation}\label{eq: inverseGaussianMarginalisation}
    \Pi^{-1} \edensity{\rho} = \edensity{\rho} + P(\rho) = \rho \edensity'_{+}(\rho),
\end{equation}
which inherits continuity at $0$ from $\edensity$ and $P$. Moreover, 

\begin{equation}
    \Pi \Pi^{-1}\edensity (\rho) = \int_{0}^{\rho} \frac{\edensity(w) + P(w)}{w} \d w = \int_{0}^{\rho} \edensity'_{+}(w) \d w = \edensity(\rho).
\end{equation}
It remains to show that $\Pi^{-1}\edensity \in \pclass_{j}$. This is clear if $j=0$, since $\frac{\Pi^{-1} \edensity{\rho}}{\rho} = \edensity'_{+}(\rho)$ is nondecreasing.

Suppose $j\geq1$. For $c>1$, we start by observing that
$\Delta_c\edensity$ is convex. For $0<a<b$, the pressure identity gives
\begin{equation}
\begin{split}
    \frac{\Delta_c\edensity(b)}{b}
    -
    \frac{\Delta_c\edensity(a)}{a}
    &=
    c\left(
        \frac{\edensity(cb)}{cb}
        -
        \frac{\edensity(ca)}{ca}
    \right)
    -
    \left(
        \frac{\edensity(b)}{b}
        -
        \frac{\edensity(a)}{a}
    \right)\\
    &=
    \int_a^b
    \frac{\Delta_cP(w)}{w^2}\d w.
\end{split}
\end{equation}
Since $\edensity\in\pclass_2$, the function $\Delta_cP$ is
nondecreasing. Consequently, one can show using the last identity that the line at $(a , \Delta_{c}\edensity (a))$ with slope $\frac{\Delta_{c} \edensity(a) + \Delta_{c} P(a)}{a}$ supports the graph of $\Delta_{c} \edensity$. Hence, $\Delta_c\edensity$ is convex. Now
\begin{equation}
    \Pi^{-1}\edensity(\rho)
    =
    \rho\edensity'_+(\rho)
    =
    \lim_{c \to 1^{+}}
    \frac{\Delta_c\edensity(\rho)}{c-1},
\end{equation}
being a pointwise limit of convex functions, is itself convex.

Let $\Pi^{-1}P$ denote the pressure of $\Pi^{-1}\edensity$. Then, for
$r>0$,
\begin{equation}\label{eq: inversepressureGaussianMarginalisation}
\begin{split}
    \Pi^{-1}P(r)
    &=
    \lim_{c\to1^{+}}
    \frac{
        \Pi^{-1}\edensity(cr)
        -
        c\Pi^{-1}\edensity(r)
    }{c-1}\\
    &=
    \lim_{c\to1^{+}}
    \frac{
        \edensity(cr)+P(cr)
        -
        c\edensity(r)-cP(r)
    }{c-1}\\
    &=
    \lim_{c\to1^{+}}
    \frac{
        \edensity(cr)-c\edensity(r)
    }{c-1}
    +
    \lim_{c\to1^{+}}
    \frac{
        P(cr)-cP(r)
    }{c-1}\\
    &=
    P(r)
    +
    \lim_{c\to1^{+}}
    \left(
        \frac{\Delta_cP(r)}{c-1}
        -
        P(r)
    \right)\\
    &=
    \lim_{c\to1^{+}}
    \frac{\Delta_cP(r)}{c-1}.
\end{split}
\end{equation}
Consequently, for $c_1,\ldots,c_j\geq1$,
\begin{equation}
\begin{split}
    \Delta_{c_1}\cdots\Delta_{c_j}\Pi^{-1}P(r)
    &=
    \Delta_{c_1}\cdots\Delta_{c_j}
    \left(
        \lim_{c\to1^{+}}
        \frac{\Delta_cP(r)}{c-1}
    \right)\\
    &=
    \lim_{c\to1^{+}}
    \frac{
        \Delta_{c_1}\cdots\Delta_{c_j}\Delta_cP(r)
    }{c-1}\\
    &=
    \lim_{c\to1^{+}}
    \frac{
        \Delta_c\Delta_{c_1}\cdots\Delta_{c_j}P(r)
    }{c-1}\\
    &\geq0,
\end{split}
\end{equation}
Thus, $\Pi^{-1}\edensity\in\pclass_j$, completing the proof.
\end{proof}

We now combine the above theorem with the classical lifting trick for contractions from \cite{Alexander85}.

\begin{proof}[Proof of Theorem \ref{thm: pressureclassmain}]
First we note that if $\edensity$ is not continuous at $0$, then the corresponding internal energy $\energy{\rho}=-\infty$ for any bounded positive probability density $\rho$ that decays at infinity. Indeed, the $\pclass_1$ condition, namely the convexity of $\edensity$, forces $\lim_{\rho\to0}\edensity(\rho)\leq0$, and failure of continuity at $0$ forces this limit to be strictly negative. Since every $\mu*\gamma_s$ has a bounded, strictly positive density tending to $0$ at infinity, in order to prove \eqref{eq: pressureclassmain}, we can assume that $\edensity$ is continuous at $0$.

As in the proof of Theorem \ref{thm: BCheatflowintro} we can as well, without loss of generality, take the Gaussian density $\gamma_{s}(x) = C e^{- \frac{1}{2s}\norm{x}^{2}}$ such that $C=1$. We continue to use the notation from the proof of
Theorem \ref{thm: BCheatflowintro}

By the nesting of the pressure classes, it suffices to consider
$\energy\in\pclass_k$, where $k=\lceil n/2\rceil$. Applying
Theorem \ref{thm: BCheatflowintro} $k$ times gives
\begin{equation}
    \Pi^{-k}\energy\in\pclass_0,
\end{equation}
where we continue to use the notation $\Pi^{-1}$ from the proof of
Theorem \ref{thm: BCheatflowintro}. Moreover, by Lemma
\ref{lem: marginalisationassociative},
\begin{equation}\label{eq: repeatedGaussianMarginalisation}
    \energy
    =
    \Pi^{k}\Pi^{-k}\energy.
\end{equation}

We use the continuous contraction from $\R^n$ into $\R^{n+2k}$ from \cite[Lemma 1]{BezdekConnelly02}, given by
\begin{equation}
\begin{split}
    \widetilde T_t(x)
    =
    \left(
        \frac{x+T(x)}{2}
        +
        \cos(\pi t)\frac{x-T(x)}{2},
        \sin(\pi t)\frac{x-T(x)}{2},
        0
    \right),
\end{split}
\end{equation}
where the final zero coordinate is present only when $n$ is odd. 

We identify $\R^n$ with the first $n$-dimensional coordinate subspace
of $\R^{n+2k}$, and view $\mu$ as a probability measure on this
subspace. Then Theorem \ref{thm: heatflowpositivepressure}, applied to
$\Pi^{-k}\energy\in\pclass_0$, gives
\begin{equation}
    \Pi^{-k}\energy
    \left(
        \mu*\gamma_s^{(n+2k)}
    \right)
    \leq
    \Pi^{-k}\energy
    \left(
        (\widetilde T_1)_\#\mu*\gamma_s^{(n+2k)}
    \right).
\end{equation}
The density on the left is
\begin{equation}
    (\mu*\gamma_s^{(n)})(x)\gamma_s^{(2k)}(y),
\end{equation}
while the density on the right is
\begin{equation}
    (T_\#\mu*\gamma_s^{(n)})(x)\gamma_s^{(2k)}(y).
\end{equation}
By \eqref{eq: repeatedGaussianMarginalisation}, the preceding
inequality is therefore exactly
\begin{equation}
    \energy{\mu*\gamma_s}
    \leq
    \energy{T_\#\mu*\gamma_s},
\end{equation}
as required.
\end{proof}
\section{Applications to Kneser--Poulsen-type problems} \label{sec: KP}

The more general form of Theorem \ref{thm: heattoKPintro} as promised in Remark \ref{rem: KPforvaryingradiiIntro}, allowing variable radii, is stated below.

\begin{theorem}\label{thm: heattoKPmain}
    Let $K\subseteq\R^n$ be compact, let $T:K\to\R^n$ be a $1$-Lipschitz map, and let $r:K\to(0,\infty)$ be continuous.
    \begin{enumerate}[label=(\roman*),ref=(\roman*)]
        \item\label{item: heattoKPunion}
        Suppose that
        \begin{equation}
            \energy{\mu*\gamma_s}
            \leq
            \energy{T_\#\mu*\gamma_s}
        \end{equation}
        for every $\mu\in\mathcal P(K)$, every $s>0$, and every internal energy $\energy$ with nondecreasing pressure. Then
        \begin{equation}\label{eq: convexdensityHeattoKP}
            \vol{\bigcup_{x\in K}\ball(T(x),r(x))}
            \leq
            \vol{\bigcup_{x\in K}\ball(x,r(x))}.
        \end{equation}

        \item\label{item: heattoKPlayers}
        Suppose that
        \begin{equation}
            \energy{\mu*\gamma_s}
            \leq
            \energy{T_\#\mu*\gamma_s}
        \end{equation}
        for every $\mu\in\mathcal P(K)$, every $s>0$, and every internal energy $\energy$ with nonnegative pressure. Then, for every
        $0<c<\min_{x\in K}r(x)^2$,
        \begin{equation}
        \begin{split}
            &\vol{
                \left(
                    \bigcup_{x\in K}
                    \ball\left(T(x),\sqrt{r(x)^2+c}\right)
                \right)
                \setminus
                \left(
                    \bigcup_{x\in K}
                    \ball\left(T(x),\sqrt{r(x)^2-c}\right)
                \right)
            }\\
            &\leq
            \vol{
                \left(
                    \bigcup_{x\in K}
                    \ball\left(x,\sqrt{r(x)^2+c}\right)
                \right)
                \setminus
                \left(
                    \bigcup_{x\in K}
                    \ball\left(x,\sqrt{r(x)^2-c}\right)
                \right)
            }.
        \end{split}
        \end{equation}
    \end{enumerate}
\end{theorem}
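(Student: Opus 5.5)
Our plan is to detect the volume of unions of balls of variable radii through internal energy measurements along the heat flow, letting $s\to0$. With variable radii the single measure must weight each point of $K$ according to its radius. For $x\in K$ and small $s$, consider the tilted Gaussian weight $e^{\frac{1}{2s}r(x)^2}\gamma_s(y-x)=C_se^{-\frac{1}{2s}(\norm{y-x}^2-r(x)^2)}$. Up to the constant $C_s$, this exceeds $1$ exactly when $y\in\ball(x,r(x))$. Varadhan-type asymptotics suggest that $s\log$ of a mixture of such terms tends to $-\min_x(\norm{y-x}^2-r(x)^2)$. This quantity is positive precisely on $\bigcup_x\ball(x,r(x))$, up to the boundary.

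\textbf{Step 1: the measures.} We fix a full-support probability measure $\lambda$ on $K$ (for instance, a countable dense convex combination of Diracs). We then set
\begin{equation}
\mu_s=\frac{e^{\frac{1}{2s}r(x)^2}\lambda(\d x)}{Z_s},\qquad Z_s=\int e^{\frac{1}{2s}r^2}\d\lambda .
\end{equation}
Since $T_\#\mu_s$ carries the same weights transported to $T(x)$, we get
\begin{equation}
\mu_s*\gamma_s(y)=\frac{C_s}{Z_s}\int e^{-\frac{1}{2s}(\norm{y-x}^2-r(x)^2)}\d\lambda(x),
\end{equation}
and similarly for the image, with $T(x)$ in place of $x$.

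\textbf{Step 2: the limit.} We use the energy density
\begin{equation}
\edensity_s(\rho)=-\min\{Z_sC_s^{-1}\rho,1\},
\end{equation}
which is convex with $\edensity_s(0)=0$, so it lies in $\pclass_1$. We must show that $-\edensity_s(\mu_s*\gamma_s)\to\ind_{\bigcup\ball(x,r(x))}$ almost everywhere, with domination. The functions involved lie in $[0,1]$ and vanish for $y$ outside a fixed bounded set only up to small errors, so we handle the tails with a Gaussian bound. The limit works as follows. If $\norm{y-x_0}<r(x_0)$, continuity of $r$ and full support of $\lambda$ give a neighbourhood of $x_0$ of positive $\lambda$-mass on which the exponent is bounded below by a positive multiple of $1/s$, so the integral tends to $+\infty$ and $\min\{\cdot,1\}\to1$. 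If $\norm{y-x}^2-r(x)^2\ge\delta>0$ for all $x$, the integral is at most $e^{-\delta/2s}\to0$. The remaining set is contained in $\{y:\min_x(\norm{y-x}^2-r(x)^2)=0\}$. We need this set to be Lebesgue-null, and we expect this to be the \textbf{main obstacle}. The function $y\mapsto\min_x(\norm{y-x}^2-r(x)^2)$ equals $\norm{y}^2$ minus a convex function, so it is semiconcave and locally Lipschitz. Where it is differentiable and vanishes, the gradient is $2(y-x^*)$ with $\norm{y-x^*}=r(x^*)>0$, hence nonzero. The coarea formula or the implicit function theorem then makes its zero level set null. The same reasoning applies to the image family. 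Dominated convergence now gives $\energy_s(\mu_s*\gamma_s)\to-\vol{\bigcup\ball(x,r(x))}$, and the same for $T$. Passing to the limit in the hypothesis gives $-\vol{\bigcup_{x}\ball(x,r(x))}\le-\vol{\bigcup_{x}\ball(T(x),r(x))}$, which is \eqref{eq: convexdensityHeattoKP}. The normalisation $Z_s$ does not matter, since it is absorbed into $\edensity_s$, and the same $\edensity_s$ serves both sides because the weights coincide.

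\textbf{Part (ii).} Here we use the differences of hinges
\begin{equation}
\edensity_s=\edensity^{(1)}_s-\edensity^{(2)}_s,\qquad \edensity^{(i)}_s(\rho)=-\min\{a_i\rho,1\},
\end{equation}
with $a_1=Z_sC_s^{-1}e^{c/2s}$ and $a_2=Z_sC_s^{-1}e^{-c/2s}$. Since $a_1>a_2$, the ratio $\edensity_s(\rho)/\rho$ is nondecreasing, so $\edensity_s\in\pclass_0$. The rescaling by $e^{\pm c/2s}$ shifts the threshold of the exponent to $\pm c$. Thus $-\edensity^{(1)}_s$ tends to the indicator of $\{\min_x(\norm{y-x}^2-r^2)<c\}$, which is the union of balls of radius $\sqrt{r^2+c}$. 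Likewise $-\edensity^{(2)}_s$ tends to the union of balls of radius $\sqrt{r^2-c}$; this is where the condition $c<\min r^2$ is needed, so that these radii are positive. The difference is the indicator of the annular region. The null boundary argument is identical to Step 2, applied with $r^2\pm c$ in place of $r^2$, and passing to the limit gives the stated inequality. The annular region comes with a minus sign, so the inequality direction reverses as required.
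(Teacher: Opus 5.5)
Your proposal is correct and follows essentially the same route as the paper. It uses the same tilted measures $\mu_s\propto e^{r(x)^2/2s}\lambda$, the same truncated convex densities $-\min\{Z_sC_s^{-1}\rho,1\}$ (and, for part (ii), their differences with thresholds shifted by $e^{\pm c/2s}$), and the same pointwise-limit-plus-dominated-convergence argument. The one deviation is in showing that the boundary set $\{y:\min_x(\norm{y-x}^2-r(x)^2)=0\}$ is Lebesgue-null: the paper identifies it with the $\delta$-level set of the distance to $\bigcup_x\ball(x,r(x)-\delta)$, while your argument via semiconcavity, the nonvanishing gradient $2(y-x^*)$ at differentiability points, and the coarea formula is an equally valid substitute.
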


\begin{proof}[Proof of \ref{item: heattoKPunion}]
Choose $\mu\in\mathcal P(K)$ with $\support\mu=K$. For $s>0$, set
\begin{equation}
    N_s
    =
    \int_K e^{\frac{r(x)^2}{2s}}\d\mu(x),
    \qquad
    \d\mu_s(x)
    =
    \frac{e^{\frac{r(x)^2}{2s}}}{N_s}\d\mu(x),
\end{equation}
and write
\begin{equation}
    \gamma_s(y)
    =
    (2\pi s)^{-n/2}e^{-\frac{\norm{y}^2}{2s}}.
\end{equation}
Then
\begin{equation}
\begin{split}
    (\mu_s*\gamma_s)(y)
    &=
    \frac{(2\pi s)^{-n/2}}{N_s}
    \int_K
    e^{\frac{1}{2s}(r(x)^2-\norm{y-x}^2)}
    \d\mu(x),
\end{split}
\end{equation}
and likewise
\begin{equation}
    (T_\#\mu_s*\gamma_s)(y)
    =
    \frac{(2\pi s)^{-n/2}}{N_s}
    \int_K
    e^{\frac{1}{2s}(r(x)^2-\norm{y-T(x)}^2)}
    \d\mu(x).
\end{equation}

Consider the convex energy density
\begin{equation}
    \edensity_s(a)
    =
    -\min\left\{(2\pi s)^{n/2}N_{s} a ,1\right\}.
\end{equation}
The assumed energy inequality gives
\begin{equation}
\begin{split}
&\int_{\R^n}\min\left\{
    \int_K e^{\frac{1}{2s}(r(x)^2-\norm{y-x}^2)}\d\mu(x),1
\right\}\d y\\
&\geq
\int_{\R^n}\min\left\{
    \int_K e^{\frac{1}{2s}(r(x)^2-\norm{y-T(x)}^2)}\d\mu(x),1
\right\}\d y.
\end{split}
\end{equation}

As $s\downarrow0$, the two integrands converge almost everywhere to
\begin{equation}
    \ind_{\cup_{x\in K}\ball(x,r(x))}
    \qquad\text{and}\qquad
    \ind_{\cup_{x\in K}\ball(T(x),r(x))},
\end{equation}
respectively. Indeed, the integral diverges if
$\norm{y-x}<r(x)$ for some $x\in K$ and tends to zero if
$\norm{y-x}>r(x)$ for every $x\in K$. For the remaining points, choose $0<\delta<\min_K r$. These points
are exactly those at distance $\delta$ from the set
$\bigcup_{x\in K}\ball(x,r(x)-\delta)$, hence have Lebesgue measure zero.

To apply dominated convergence, let $R=\max_Kr$. For $s<1$,
\begin{equation}
\begin{split}
&\min\left\{
    \int_K e^{\frac{1}{2s}(r(x)^2-\norm{y-x}^2)}\d\mu(x),1
\right\}\\
&\leq
\ind_{K+R\ball}(y)
+
\bigl(1-\ind_{K+R\ball}(y)\bigr)
e^{\frac12(R^2-d(y,K)^2)},
\end{split}
\end{equation}
and the right-hand side is integrable. The same argument applies with $K$ replaced by $T[K]$. Dominated convergence now gives \eqref{eq: convexdensityHeattoKP}.
\end{proof}

\begin{proof}[Proof of \ref{item: heattoKPlayers}]
Use the same measures $\mu_s$ as above. For $\lambda>0$, set
\begin{equation}
    \Theta_\lambda(a)
    =
    -\min\left\{\frac{a}{\lambda},1\right\}.
\end{equation}
For $\alpha\in\R$, define
\begin{equation}
    \lambda_s(\alpha)
    =
    \frac{(2\pi s)^{-n/2}}{N_s} e^{\frac{\alpha}{2s}}.
\end{equation}
The argument in \ref{item: heattoKPunion}, with $r(x)^2$ replaced by
$r(x)^2-\alpha$, gives
\begin{equation}\label{eq: truncationPowerLimit}
\begin{split}
&\int_{\R^n}
\Theta_{\lambda_s(\alpha)}((\mu_s*\gamma_s)(y))
\d y \longrightarrow
-
\vol{
    \bigcup_{x\in K}
    \ball\left(x,\sqrt{r(x)^2-\alpha}\right)
}
\end{split}
\end{equation}
whenever the radii on the right are positive, and similarly with $x$ replaced by $T(x)$.

Fix $0<c<\min_Kr^2$ and set
\begin{equation}
    \edensity_{s,c}
    =
    \Theta_{\lambda_s(-c)}
    -
    \Theta_{\lambda_s(c)}.
\end{equation}
Since $\lambda_s(-c)<\lambda_s(c)$, a direct check shows that
$\edensity_{s,c}(a)/a$ is nondecreasing. Hence
$\edensity_{s,c}\in\pclass_0$.

The assumed energy inequality therefore gives
\begin{equation}
    \energy_{s,c}(\mu_s*\gamma_s)
    \leq
    \energy_{s,c}(T_\#\mu_s*\gamma_s).
\end{equation}
Using \eqref{eq: truncationPowerLimit} with $\alpha=-c$ and $\alpha=c$ and letting $s\downarrow0$ yields
\begin{equation}
\begin{split}
&-
\vol{
    \left(
        \bigcup_{x\in K}
        \ball\left(x,\sqrt{r(x)^2+c}\right)
    \right)
    \setminus
    \left(
        \bigcup_{x\in K}
        \ball\left(x,\sqrt{r(x)^2-c}\right)
    \right)
}\\
&\leq
-
\vol{
    \left(
        \bigcup_{x\in K}
        \ball\left(T(x),\sqrt{r(x)^2+c}\right)
    \right)
    \setminus
    \left(
        \bigcup_{x\in K}
        \ball\left(T(x),\sqrt{r(x)^2-c}\right)
    \right)
}.
\end{split}
\end{equation}
This is the desired inequality.
\end{proof}

The geometric conclusions stated in Corollary
\ref{cor: classicalKPconsequencesIntro} now follow immediately.
Corollary \ref{cor: classicalKPconsequencesIntro} \ref{item: CsikosKPIntro}, follows from Theorem
\ref{thm: heatflowpositivepressure} and Theorem
\ref{thm: heattoKPmain} \ref{item: heattoKPunion}.
Corollary \ref{cor: classicalKPconsequencesIntro} \ref{item: planarKPIntro}, follows from Theorem
\ref{thm: pressureclassmain} in dimension $2$ and Theorem
\ref{thm: heattoKPmain} \ref{item: heattoKPunion}.
Finally, for obtaining Corollary \ref{cor: classicalKPconsequencesIntro} \ref{item: surfaceKPIntro} from Theorem
\ref{thm: heattoKPmain} \ref{item: heattoKPlayers}, we say a few more words.

\begin{proof}[Proof of Corollary \ref{cor: classicalKPconsequencesIntro} \ref{item: surfaceKPIntro}]
For $z=(z_1,\ldots,z_k)$, set
\begin{equation}
    V_z(u)
    =
    \vol{
        \bigcup_{i=1}^k
        \ball\left(z_i,\sqrt{r_i^2+u}\right)
    }.
\end{equation}
By Theorem \ref{thm: heatflowpositivepressure} and Theorem
\ref{thm: heattoKPmain} \ref{item: heattoKPlayers}, for every
sufficiently small $c>0$,
\begin{equation}
    V_y(c)-V_y(-c)
    \leq
    V_x(c)-V_x(-c).
\end{equation}
Dividing by $2c$ and letting $c\downarrow0$ gives
\begin{equation}
    V_y'(0)\leq V_x'(0).
\end{equation}
For a finite union of balls,
\begin{equation}
    V_z'(0)
    =
    \frac12
    \sum_{i=1}^k
    \frac{\mathcal H^{n-1}(F_i^z)}{r_i};
\end{equation}
see, for example, \cite[Theorem~3]{BezdekConnelly02}. The desired
inequality follows.
\end{proof}

\bibliographystyle{amsplain}
\bibliography{shoulderofgiants}
\end{document}